\theoremstyle{definition}
\newtheorem{thm}{Theorem}[section]
\newtheorem{rem}[thm]{Remark}
\newtheorem{defi}[thm]{Definition}
\newtheorem{lem}[thm]{Lemma}
\newtheorem{cor}[thm]{Corollary}
\newtheorem{prop}[thm]{Proposition}
\newtheorem{ex}[thm]{Example}
\theoremstyle{thmstyleone}%
\theoremstyle{thmstyletwo}%
\theoremstyle{thmstylethree}%
\newcommand{\boldf}{\mathbf{f}}
\newcommand{\bh}{\mathbf{h}}
\newcommand{\bk}{\mathbf{k}}
\newcommand{\bm}{\mathbf{m}}
\newcommand{\bn}{\mathbf{n}}
\newcommand{\bx}{\mathbf{x}}
\newcommand{\by}{\mathbf{y}}
\newcommand{\bz}{\mathbf{z}}
\newcommand{\bbE}{\mathbb{E}}
\newcommand{\bbN}{\mathbb{N}}
\newcommand{\bbP}{\mathbb{P}}
\newcommand{\bbR}{\mathbb{R}}
\newcommand{\bbZ}{\mathbb{Z}}
\newcommand{\cH}{\mathcal{H}}
\newcommand{\cT}{\mathcal{T}}
\newcommand{\es}{\emptyset}
\newcommand{\Gbar}{\overline{G}}
\newcommand{\ra}{\rightarrow}
\newcommand{\sm}{\setminus}
\newcommand{\sbse}{\subseteq}
\newcommand{\iu}{\mathrm{i}}
\newcommand{\Le}{L_2^{\text{extr}}}
\newcommand{\Lp}{L_2^{\text{per}}}
\newcommand{\Ls}{L_2^{\text{star}}}
\newcommand{\prob}{\operatorname{prob}}
\newcommand{\Var}{\operatorname{Var}}
\newcommand{\epsi}{\epsilon}
\newcommand{\exT}{\epsi^{}_\cT}
\newcommand{\etp}{\eta^\text{p}_2}
\newcommand{\eoe}{\eta^\text{e}_1}
\newcommand{\ete}{\eta^\text{e}_2}
\newcommand{\dx}{\text{d}}
\newcommand{\ETe}{E^\text{extr}_\cT}
\newcommand{\ETp}{E^\text{per}_\cT}
\begin{document}

\title[Article Title]{On the $L_2$-discrepancy of Latin hypercubes}


\author*{\fnm{Nicolas} \sur{Nagel}\footnote{\textsc{Department of Mathematics, University of Technology Chemnitz, Germany} \\
		\texttt{nicolas.nagel@math.tu-chemnitz.de} \\
		ORCID iD: 0009-0004-3362-3543}}


%

\abstract{We investigate $L_2$-discrepancies of what we call weak Latin hypercubes. In this case it turns out that there is a precise equivalence between the extreme and periodic $L_2$-discrepancy which follows from a much broader result about generalized energies for weighted point sets.
	
Motivated by this we study the asymptotics of the optimal $L_2$-discrepancy of weak Latin hypercubes. We determine asymptotically tight bounds for $d \geq 3$ and even the precise (dimension dependent) constant in front of the dominating term for $d \geq 4$.}

\keywords{$L_2$-discrepancy, periodic discrepancy, extreme discrepancy, Latin hypercubes, weighted point sets, energies \\ Subject classification: 12D10, 41A44, 52C35, 65C05}

\maketitle

\section{Introduction}

Given a finite set $X = \{\bx^1, ..., \bx^N\} \sbse [0, 1)^d$, the notion of discrepancy measures how uniformly distributed $X$ is over the domain $[0, 1)^d$. The formal definition goes as follows. For $x, y \in [0, 1)$ consider the \textbf{(periodic) interval}
$$
[x, y) \coloneqq \begin{cases}
	\{z \in [0, 1): x \leq z < y\} & , x < y \\
	\{z \in [0, 1): x \leq z \text{ or } z < y\} & , x \geq y
\end{cases}
$$
(that is, it is the usual interval if $x < y$, otherwise we interprete $[0, 1)$ as a torus and the periodic interval wraps around $1$) and for $\bx = (x_1, \dots, x_d), \by = (y_1, \dots, y_d) \in [0, 1)^d$ the corresponding $d$-dimensional version $[\bx, \by) \coloneqq \prod_{k=1}^d [x_k, y_k)$. The Lebesgue measure of such a set is given by
$$
|[\bx, \by)| = \prod_{\substack{k=1 \\ x_k < y_k}}^d (y_k-x_k) \prod_{\substack{k=1 \\ x_k \geq y_k}}^d (1-x_k+y_k).
$$
This enables us to define the \textbf{discrepancy function}
$$
D_X(\bx, \by) \coloneqq \#(X \cap [\bx, \by)) - N |[\bx, \by)|
$$
of $X \sbse [0, 1)^d, \#X = N$ (note that some authors use the normalized $N^{-1} D_X(\bx, \by)$ as the discrepancy function). The discrepancy of $X$ is now given by a norm of $D_X$. We will be interested in the $L_2$-norm, of which we can define three types, depending on the domain of allowed boxes $[\bx, \by)$. The \textbf{star/extreme/periodic $L_2$-discrepancy} are given respectively by
\begin{align*}
	\Ls(X)^2 & \coloneqq \int_{[0, 1)^d} D_X(\mathbf{0}, \by)^2 \, \dx\by, \\
	\Le(X)^2 & \coloneqq \int_{[0, 1)^d} \int_{\by > \bx} D_X(\bx, \by)^2 \, \dx\by \, \dx\bx, \\
	\Lp(X)^2 & \coloneqq \int_{[0, 1)^d} \int_{[0, 1)^d} D_X(\bx, \by)^2 \, \dx\by \, \dx\bx,
\end{align*}
where $\by > \bx$ is supposed to be understood componentwise. Thus, star discrepancy only considers boxes anchored at $\mathbf{0}$, extreme discrepancy takes all non-periodic intervals into account, while periodic discrepancy also considers periodic intervals on the torus. Clearly $\Le(X) \leq \Lp(X)$ as the integral for the periodic $L_2$-discrepancy goes over a larger domain.

It is easy to calculate the integrals explicitely in dependence of $X$ and we get the formulas (often called Warnock-type formulas, see \cite{HKP20, MC94, Nie73, NW08, War72})
\begin{align} \label{eq:Warnock}
	\begin{split}
		\Ls(X)^2 & = \frac{N^2}{3^d} - \frac{N}{2^{d-1}} \sum_{\bx \in X} \prod_{k=1}^d (1-x_k^2) + \sum_{\bx, \by \in X} \prod_{k=1}^d (1-\max\{x_k, y_k\}), \\
		\Le(X)^2 & = \frac{N^2}{12^d} - \frac{N}{2^{d-1}} \sum_{\bx \in X} \prod_{k=1}^d x_k(1-x_k) + \sum_{\bx, \by \in X} \prod_{k=1}^d (\min\{x_k, y_k\} - x_k y_k), \\
		\Lp(X)^2 & = -\frac{N^2}{3^d} + \sum_{\bx, \by \in X} \prod_{k=1}^d \left(\frac{1}{2} - |x_k-y_k| + |x_k-y_k|^2\right).
	\end{split}
\end{align}
The importance of discrepancy comes from its application to numerical integration, in particular quasi Monte Carlo methods. Approximating an integral via
$$
\int_{[0, 1)^d} f(\bx) \, \dx\bx \approx \frac{1}{N} \sum_{\bx \in X} f(\bx)
$$
with $\#X = N$, the relative worst case error can usually be expressed by some notion of discrepancy of the point set $X$. We recall the precise setup from \cite{HO16} (adapted to our setting). Consider the Sobolev space $H^1[0, 1]$ consisting of absolutely continuous functions $f: [0, 1] \ra \bbR$ with $f \in L_2[0, 1]$, $f(0) = f(1)$ (periodicity) and
$$
\|f\|_{H^1[0, 1]}^2 \coloneqq 3 \left(\int_0^1 f(x) \, \dx x\right)^2 + \frac12 \int_0^1 f'(x)^2 \, \dx x < \infty,
$$
where $f'$ denotes the weak derivative. Furthermore, let $H^1[0, 1]^d$ be given by the $d$-fold tensor product of $H^1[0, 1]$ with itself. The norm of $H^1[0, 1]$ induces a norm on $H^1[0, 1]^d$ and it turns out that
\begin{align*}
	\Lp(X) = N \cdot \sup\limits_{\|f\|_{H^1[0, 1]^d} \leq 1} \left|\int_{[0, 1)^d} f(\bx) \, \dx\bx - \frac{1}{N} \sum_{\bx \in X} f(\bx)\right|.
\end{align*}
Indeed, this is a consequence of $H^1[0, 1]^d$ being a \textit{reproducing kernel Hilbert space} (see \cite{BT11} for the general theory) with kernel given by
$$
K(\bx, \by) \coloneqq \prod_{k=1}^d \left(\frac{1}{2} - |x_k-y_k| + |x_k-y_k|^2\right)
$$
where $\bx, \by \in [0, 1]^d$, for which the integration error can be expressed easily (see \cite{NW08} formula (9.31)). Similar statements can be made for the star and extreme $L_2$-discrepancy, see the proof of Theorem 5 in \cite{HKP20}. Minimizing $\Lp(X)$ is thus equivalent to minimizing the error of the quasi Monte Carlo rule for integration. For further material on quasi Monte Carlo methods and related topics see \cite{DP19, Ham13, KN12, Nie92}.

Another related notion to periodic $L_2$-discrepancy is the so called \emph{diaphony} as introduced in \cite{Zin76}, also see \cite{Pil23}, given by 
\begin{align} \label{eq:diaphony}
	F_N(X)^2 \coloneqq \sum_{\bh \in \bbZ^d \sm \{\mathbf{0}\}} \left(\prod_{k=1}^d \max\{1, |h_k|\}\right)^{-2} \frac1N \left|\sum_{\bx \in X} \exp(2\pi\iu \bh \cdot \bx)\right|^2,
\end{align}
where $\iu^2=-1$ is the imaginary unit and $\bh \cdot \bx$ denotes the usual inner product. In fact, the periodic $L_2$-discrepancy has a very similar representation given by \cite{HKP20, Pil23}
\begin{align} \label{eq:periodisc_disc_Fourier}
	\Lp(X)^2 = \frac1{3^d} \sum_{\bh \in \bbZ^d \sm \{\mathbf{0}\}} \left(\prod_{k=1}^d \max\left\{1, \frac{2\pi}{\sqrt{6}} |h_k|\right\}\right)^{-2} \left|\sum_{\bx \in X} \exp(2\pi\iu \bh \cdot \bx)\right|^2.
\end{align}
For a connection between diaphony and worst case errors of quasi Monte Carlo integration and further references see \cite{Pil13}.

The optimal (asymptotic) behaviour of the three notions of discrepancy as introduced above is of particular interest and has been studied for a long time. It is a classical result of Roth \cite{Rot54} that
$$
\Ls(X) \gtrsim (\log N)^{(d-1)/2},
$$
which was generalized in \cite{HKP20} for the other notions of discrepancy to
\begin{align} \label{ineq:roth}
	\Lp(X) \geq \Le(X) \gtrsim (\log N)^{(d-1)/2}
\end{align}
(also see \cite{HMOU16} and Section 8.7 in \cite{DTU18} for the general setting of quasi Monte Carlo integration over Besov spaces). All bounds are known to be tight up to the multiplicative constant \cite{Che80, CS02, Dav56, GSY15, Rot80} (also see \cite{Skr06} and further references therein). The same bound (up to the implicite constant) holds for the \emph{$L_p$-discrepancy}, that is the $p$-norm of the discrepancy function, for any $1 < p < \infty$ (see \cite{Sch77} and \cite{Bil14} for a detailed survey on the case of star $L_p$-discrepancy as well as \cite{KP22} for extreme and periodic $L_p$-discrepancy). The asymptotics in the case of $p=\infty$ is a major open problem in discrepancy theory and there is still a sizable gap between the best know upper and lower bounds, see \cite{BLV08} for the best known results in this directions.

Two common examples of two-dimensional point sets having periodic $L_2$-discrepancy of asymptotically optimal order are the Hammersley/van der Corput point sets \cite{Cor35, Ham13} and the Fibonacci lattice \cite{BTY12} (which are particular examples of rational/integration lattices \cite{HKP20}). Note that Fibonacci lattices are also of optimal order for star and extreme $L_2$-discrepancy, while the Hammersley point set is only optimal for extreme $L_2$-discrepancy. For star $L_2$-discrepancy it is of worse order than Roth's bound, see Theorem 8 in \cite{HKP20} and Theorem 9 in the same paper or \cite{FP11, HZ69} for an improved version of the Hammersley set. In \cite{HKP20} it was observed that Hammersley sets and rational lattices $X \sbse [0, 1)^2$ consisting of $N$ points fulfil the precise relation

\begin{align} \label{eq:Lp_Le_two_dim}
	\Lp(X)^2 - 4 \Le(X)^2 = \frac{N^2+1}{18N^2}.
\end{align}
Both of these classes of point sets are particular examples of what we will call \textit{permutation sets}.

\begin{defi} \label{def:permutation_set}
	Let $\sigma: \{0, 1, \dots, N-1\} \ra \{0, 1, \dots, N-1\}$ be a permutation, define the \textbf{permutation set}
	$$
	X(\sigma) \coloneqq \left\{\left(\frac{m}{N}, \frac{\sigma(m)}{N}\right): m=0, 1, \dots, N-1\right\}.
	$$
\end{defi}

It turns out that \eqref{eq:Lp_Le_two_dim} holds even more general for permutation sets.

\begin{thm} \label{thm:Lp_Le_permutation}
	For any permutation $\sigma: \{0, 1, \dots, N-1\} \ra \{0, 1, \dots, N-1\}$ it holds
	\begin{align*} 
		\Lp(\sigma)^2 - 4 \Le(\sigma)^2 = \frac{N^2+1}{18N^2}.
	\end{align*}
\end{thm}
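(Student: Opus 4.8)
The plan is to compute both $\Lp(\sigma)^2$ and $\Le(\sigma)^2$ in closed form directly from the Warnock-type formulas in \eqref{eq:Warnock}, specialized to the highly structured point set $X(\sigma)$, and then take the linear combination $\Lp(\sigma)^2 - 4\Le(\sigma)^2$ and watch the $\sigma$-dependent terms cancel. For a permutation set the coordinates run over the fixed grid $\{0/N, 1/N, \dots, (N-1)/N\}$ in each of the two directions, so every sum over $\bx, \by \in X(\sigma)$ is a double sum over $m, n \in \{0,\dots,N-1\}$ with $\bx = (m/N, \sigma(m)/N)$ and $\by = (n/N, \sigma(n)/N)$. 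The key observation is that because $\sigma$ is a bijection, a sum of the form $\sum_{m} g(\sigma(m)/N)$ equals $\sum_{m} g(m/N)$, i.e. it does not see $\sigma$ at all; similarly, the ``diagonal'' contribution $\sum_m h(m/N, \sigma(m)/N)$ factors only when $h$ is a product and one factor is summed trivially. So the only genuinely $\sigma$-dependent part is the off-diagonal piece of the quadratic (Warnock) term, where both the first coordinates $m/N, n/N$ and the second coordinates $\sigma(m)/N, \sigma(n)/N$ appear coupled.

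The steps I would carry out, in order:
\begin{enumerate}
	\item \textbf{Reduce to one-dimensional sums over the grid.} Introduce the abbreviations, for $j, j' \in \{0, \dots, N-1\}$ with $x = j/N$, $y = j'/N$,
	\[
		A(j, j') \coloneqq \tfrac12 - |x - y| + |x - y|^2, \qquad B(j, j') \coloneqq \min\{x, y\} - xy,
	\]
	so that by \eqref{eq:Warnock},
	\[
		\Lp(\sigma)^2 = -\tfrac{N^2}{9} + \sum_{m, n} A(m, n)\, A(\sigma(m), \sigma(n)),
	\]
	and similarly $\Le(\sigma)^2 = \tfrac{N^2}{144} - \tfrac{N}{2}\sum_m \tfrac{m}{N}(1-\tfrac mN)\cdot \tfrac{\sigma(m)}{N}(1-\tfrac{\sigma(m)}{N}) + \sum_{m, n} B(m, n)\, B(\sigma(m), \sigma(n))$, where I have used $d = 2$.
	\item \textbf{Exploit the algebraic identity $A = \tfrac12 + 2B$ on the grid.} A direct check gives $\tfrac12 - |x-y| + |x-y|^2 = \tfrac12 + 2(\min\{x,y\} - xy)$ for all $x, y \in [0,1]$ (expand $(x-y)^2$ and treat the two cases $x \le y$, $x \ge y$). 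Substituting $A(\sigma(m), \sigma(n)) = \tfrac12 + 2B(\sigma(m), \sigma(n))$ into the quadratic term of $\Lp(\sigma)^2$ and expanding the product $A(m,n)A(\sigma(m),\sigma(n))$ produces a term $4\sum_{m,n} B(m,n)B(\sigma(m),\sigma(n))$ — which is exactly the $\sigma$-dependent quadratic term appearing in $4\Le(\sigma)^2$ — plus cross terms of the shape $\sum_{m,n} A(m,n)$, $\sum_{m,n} B(\sigma(m),\sigma(n))$, and $\sum_{m,n} A(m,n) \cdot \text{const}$, none of which couple $m/N$ with $\sigma(m)/N$.
	\item \textbf{Evaluate the remaining $\sigma$-free sums.} Every leftover sum is either a constant times $N^2$, or a single sum $\sum_j B(j,j)$ / $\sum_{j} A(j,j)$ along the diagonal, or a double sum $\sum_{j, j'} B(j, j')$; by the bijectivity remark these are the same whether evaluated at $j$ or at $\sigma(j)$, so they are explicit functions of $N$ only. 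These are standard discrete sums: $\sum_{m=0}^{N-1} \tfrac mN(1 - \tfrac mN) = \tfrac{(N-1)(N+1)}{6N}$, $\sum_{m,n} B(m,n)$ and $\sum_{m,n}(\min\{m,n\} - mn/N^2)/N$-type sums reduce to sums of $1, m, m^2$ and are evaluated by the usual Faulhaber formulas.
	\item \textbf{Assemble and simplify.} Collect all pieces of $\Lp(\sigma)^2 - 4\Le(\sigma)^2$. The quadratic $\sigma$-coupled terms cancel by construction (step 2), the term $-\tfrac N2 \sum_m \cdots$ in $4\Le(\sigma)^2$ must be matched against a cross term generated in step 2 (this is where the single-coordinate diagonal sum $\sum_m \tfrac{\sigma(m)}{N}(1-\tfrac{\sigma(m)}{N}) = \sum_m \tfrac mN(1-\tfrac mN)$ is used), and what remains is a rational function of $N$ which one simplifies to $\tfrac{N^2+1}{18N^2}$.
\end{enumerate}

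The main obstacle I anticipate is bookkeeping rather than conceptual: after applying $A = \tfrac12 + 2B$ and expanding, one must carefully track \emph{all} the cross terms (there are several, some involving $\sum_{m,n} A(m,n)$, some $\sum_{m,n} B(\sigma(m),\sigma(n)) = \sum_{m,n} B(m,n)$, and one linear-in-$B$ ``mixed diagonal'' term that has to line up with the $-\tfrac{N}{2^{d-1}}$ term of the extreme Warnock formula) and verify the diagonal ($m = n$) contributions are handled consistently, since $B(j,j) = j/N - (j/N)^2$ is nonzero. A secondary, purely computational hazard is getting the small-order terms exactly right: the answer $\tfrac{N^2+1}{18N^2} = \tfrac{1}{18} + \tfrac{1}{18N^2}$ has a nontrivial $N^{-2}$ correction, so every Faulhaber sum must be kept exactly (no leading-order approximations), and the evaluation of the $\min\{x,y\}$-type sums on the grid is where an off-by-one in the ranges would corrupt that correction term.
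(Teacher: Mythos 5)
There is a genuine gap in Step 2: the claimed identity $\tfrac12 - |x-y| + |x-y|^2 = \tfrac12 + 2(\min\{x,y\} - xy)$ is false. Writing $|x-y| + 2\min\{x,y\} = x+y$, the difference of the two sides is $x^2 + y^2 - (x+y)$, which vanishes only for $x,y \in \{0,1\}$; e.g.\ at $x=y=\tfrac12$ the left side is $\tfrac12$ while the right side is $1$. The correct relation is
\begin{align*}
\tfrac12 - |x-y| + |x-y|^2 \;=\; 2\bigl(\min\{x,y\} - xy\bigr) + \tfrac12 - x(1-x) - y(1-y),
\end{align*}
which is exactly the energy-triple relation $\etp(s,t) = \ete(s,t) + \eta(s) + \eta(t) - \eta(0)$ of Definition \ref{def:energy_triple}(iii) for $\eta(s) = \tfrac12 - s + s^2$. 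The omitted terms $-x(1-x)-y(1-y)$ are not a cosmetic error: they are what makes the cancellation work. The linear term $-\tfrac N2 \sum_m \tfrac mN(1-\tfrac mN)\cdot\tfrac{\sigma(m)}N(1-\tfrac{\sigma(m)}N)$ in $4\Le(\sigma)^2$ genuinely couples $m$ with $\sigma(m)$ and is \emph{not} removable by the bijectivity trick, contrary to what your Step 4 suggests (it is not a single-coordinate sum). With your identity the cross terms generated in Step 2 are all $\sigma$-free, so this coupled term survives in $\Lp(\sigma)^2 - 4\Le(\sigma)^2$ and the claimed $\sigma$-independence fails. With the corrected identity, the cross terms produce (using $\sum_n (\min\{\tfrac mN,\tfrac nN\} - \tfrac{mn}{N^2}) = \tfrac N2\cdot\tfrac mN(1-\tfrac mN)$) exactly the contributions $\pm 2N\sum_m \tfrac mN(1-\tfrac mN)\tfrac{\sigma(m)}N(1-\tfrac{\sigma(m)}N)$ needed to cancel that term, and the remaining $\sigma$-free sums do assemble to $\tfrac{N^2+1}{18N^2}$.

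Once repaired, your route is a legitimate and genuinely different proof from the paper's: you work directly with the two-dimensional Warnock formulas and an explicit kernel identity, whereas the paper obtains the statement as the $d=2$ case of Theorem \ref{cor:Lp_Le_latin_hypercubes}, which rests on the general excess identity of Proposition \ref{thm:expression_for_excess} for arbitrary energy triples. Your approach is more elementary and self-contained for $d=2$; the paper's buys the generalization to weak Latin hypercubes in all dimensions and to arbitrary energies. As written, however, the proof does not go through.
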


We can generalize this to arbitrary dimensions as follows. Note that among the \textit{discretized torus} $G \coloneqq \frac{1}{N} \{0, 1, \dots, N-1\}^2$ permutation sets are precisely those sets $X \sbse G$ with $\#X = N$ and containing exactly one point from every row and column. Generalizing this to the $d$-dimensional discretized torus we obtain the notion of a \textit{(weak) Latin hypercube}, which will be made precise in Section \ref{sec:weighted_discrepancy} and an analogous result to Theorem \ref{thm:Lp_Le_permutation} will be given in Theorem \ref{cor:Lp_Le_latin_hypercubes}.

The remaining paper will be structured as follows. Motivated by trying to find a higher dimensional analogue of Theorem \ref{thm:Lp_Le_permutation} we first introduce a weighted notion of $L_2$-discrepancy and, even more general, energies closely related to extreme and periodic $L_2$-discrepancy. There we will prove a central and general result in Proposition \ref{thm:expression_for_excess}. After that, we apply this to the case of weak Latin hypercubes (a notion that we will introduce in Section \ref{sec:weighted_discrepancy}) giving us Theorem \ref{cor:Lp_Le_latin_hypercubes} for (unweighted) $L_2$-discrepancy, the case $d=2$ being Theorem \ref{thm:Lp_Le_permutation}. A quick look at this result also gives us a lower bound on the periodic $L_2$-discrepancy for weak Latin hypercubes. Motivated by this, we introduce another perspective on our general notion of energy (Theorem \ref{thm:svd_energy}) giving us lower bounds (Theorem \ref{cor:lower_bound_latin_hypercube_discrepancy}) for both periodic and extreme $L_2$-discrepancy of weak Latin hypercubes. We finish by considering random weak Latin hypercubes. While their periodic $L_2$-discrepancy itself seems hard to handle over all, we can still make some statistical statements, determining their expectation in Theorem \ref{cor:expectation_discrepancy}. This gives us upper bounds for the optimal periodic $L_2$-discrepancy of weak Latin hypercubes that match the lower bound asymptotically for $d \geq 3$ and even in the precise dominant behaviour (asymptotically and in the constant factor) for $d \geq 4$.

\section{Discrepancy of weighted point sets and energies} \label{sec:weighted_discrepancy}

A \textbf{weighted point set} $(X, w)$ (called \emph{net} in \cite{Lev95, Lev99}) is given by a finite set $X \sbse [0, 1)^d$ and a function $w: X \ra \bbR$. Note that we allow the weights to be negative. The \textbf{discrepancy function} of $(X, w)$ is given by
$$
D_{(X, w)}(\bx, \by) \coloneqq \sum_{\substack{\bz \in X \\ \bx \leq \bz < \by}} w(\bz) - \sum_{\bz \in X} w(\bz) |[\bx, \by)|
$$
and the corresponding notions of \textbf{star/extreme/periodic $L_2$-discrepancy of weighted point sets} by
\begin{align*}
	\Ls(X, w)^2 & \coloneqq \int_{[0, 1)^d} D_{(X, w)}(0, \by)^2 \, \dx\by, \\
	\Le(X, w)^2 & \coloneqq \int_{[0, 1)^d} \int_{\by > \bx} D_{(X, w)}(\bx, \by)^2 \, \dx\by \, \dx\bx, \\
	\Lp(X, w)^2 & \coloneqq \int_{[0, 1)^d} \int_{[0, 1)^d} D_{(X, w)}(\bx, \by)^2 \, \dx\by \, \dx\bx.
\end{align*}
It is straightforward to determine Warnock-type formulas for these discrepancies. The proofs are analogous to the unweighted case, so we will omit them here.

\begin{prop} \label{prop:weighted_Warnock}
	Let $(X, w)$ be a weighted point set in $[0, 1)^d$, then it holds
	\begin{align*}
		\Ls(X, w)^2 & = \frac{\left(\sum_{\bx \in X} w(\bx)\right)^2}{3^d} - \frac{\sum_{\bx \in X} w(\bx)}{2^{d-1}} \sum_{\bx \in X} w(\bx) \prod_{k=1}^d (1-x_k^2) \\
		& + \sum_{\bx, \by \in X} w(\bx) w(\by) \prod_{k=1}^d (1-\max\{x_k, y_k\}),
	\end{align*}
	\begin{align*}
		\Le(X, w)^2 & = \frac{\left(\sum_{\bx \in X} w(\bx)\right)^2}{12^d} - \frac{\sum_{\bx \in X} w(\bx)}{2^{d-1}} \sum_{\bx \in X} w(\bx) \prod_{k=1}^d x_k(1-x_k) \\
		& + \sum_{\bx, \by \in X} w(\bx) w(\by) \prod_{k=1}^d (\min\{x_k, y_k\} - x_k y_k),
	\end{align*}
	\begin{align*}
		& \Lp(X, w)^2 \\
		= & -\frac{\left(\sum_{\bx \in X} w(\bx)\right)^2}{3^d} + \sum_{\bx, \by \in X} w(\bx) w(\by) \prod_{k=1}^d \left(\frac{1}{2} - |x_k-y_k| + |x_k-y_k|^2\right).
	\end{align*}
\end{prop}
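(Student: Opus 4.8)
The three formulas are weighted analogues of the classical Warnock identities \eqref{eq:Warnock}, and the natural approach is to mimic the standard derivation: expand the square in the integrand, split into three pieces (the ``cross term squared'', the ``mixed term'', and the ``volume term squared''), and integrate each piece using elementary one-dimensional integrals that factor over the coordinates. Since the weighted discrepancy function is
$$
D_{(X,w)}(\bx,\by) = \sum_{\bz\in X} w(\bz)\,\mathbf{1}_{[\bx,\by)}(\bz) - \Big(\sum_{\bz\in X} w(\bz)\Big)|[\bx,\by)|,
$$
the square produces
$$
D_{(X,w)}(\bx,\by)^2 = \sum_{\bz,\bz'\in X} w(\bz)w(\bz')\,\mathbf{1}_{[\bx,\by)}(\bz)\mathbf{1}_{[\bx,\by)}(\bz')
- 2\Big(\sum_{\bz\in X} w(\bz)\Big)\sum_{\bz'\in X} w(\bz')\,\mathbf{1}_{[\bx,\by)}(\bz')\,|[\bx,\by)|
+ \Big(\sum_{\bz\in X} w(\bz)\Big)^2 |[\bx,\by)|^2.
$$
Because these are just the unweighted summands carrying the extra scalar factors $w(\bz)w(\bz')$ (or $w(\bz')$, or nothing), the integrations are term-by-term identical to the classical computation; only the combinatorial bookkeeping of the coefficients changes, with $N^2$ replaced by $\left(\sum_{\bx\in X}w(\bx)\right)^2$, a single $N$ replaced by $\sum_{\bx\in X}w(\bx)$, and the double sum over $X$ acquiring the product weight $w(\bx)w(\by)$.

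Concretely, I would handle the periodic case first since it is cleanest. For the pair term one integrates $\int_{[0,1)^d}\int_{[0,1)^d}\mathbf{1}_{[\bx,\by)}(\bz)\mathbf{1}_{[\bx,\by)}(\bz')\,\dx\by\,\dx\bx$, which factors coordinatewise into $\int_0^1\int_0^1\mathbf{1}_{[x_k,y_k)}(z_k)\mathbf{1}_{[x_k,y_k)}(z_k')\,\dx y_k\,\dx x_k = \tfrac12 - |z_k-z_k'| + |z_k-z_k'|^2$ (the reproducing kernel $K$); the volume-squared term integrates to $\int\int|[\bx,\by)|^2 = 3^{-d}$; and the mixed term, by the torus symmetry of the periodic interval, integrates to $\int\int \mathbf{1}_{[\bx,\by)}(\bz)\,|[\bx,\by)| = 3^{-d}$ in every coordinate, so the $-2$ and $+1$ coefficients combine with $\left(\sum w\right)^2$ to give the net $-3^{-d}\left(\sum_{\bx}w(\bx)\right)^2$. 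Summing over $\bz,\bz'\in X$ with the weights yields the stated periodic formula. For the star case one replaces $[\bx,\by)$ by $[\mathbf 0,\by)$ and uses $\int_0^1 \mathbf{1}_{[0,y_k)}(z_k)\,\dx y_k = 1-z_k$, $\int_0^1\mathbf{1}_{[0,y_k)}(z_k)\mathbf{1}_{[0,y_k)}(z_k')\,\dx y_k = 1-\max\{z_k,z_k'\}$, $\int_0^1(1-z_k^2)\,\dx z_k$ appearing in the mixed term, and $\int_0^1 y_k^2\,\dx y_k = 3^{-1}$; the extreme case uses the analogous identities over the region $\by>\bx$, giving $\min\{z_k,z_k'\}-z_k z_k'$ for the pair term, $z_k(1-z_k)$ for the mixed term, and $12^{-1}$ for the volume-squared term.

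There is no real obstacle here: every one-dimensional integral is elementary and already implicit in the unweighted identities \eqref{eq:Warnock}, so the only thing to be careful about is that the weighted discrepancy function subtracts the \emph{total} weight $\sum_{\bz\in X}w(\bz)$ times the volume (not a per-point quantity), which is exactly why one factor of $\sum_{\bx}w(\bx)$ appears linearly in the mixed term while the other sum over $X$ is weighted. The cleanest write-up simply observes that replacing the indicator-valued ``counting measure'' $\sum_{\bz\in X}\delta_{\bz}$ by the signed measure $\sum_{\bz\in X}w(\bz)\delta_{\bz}$ throughout the classical proof changes nothing structurally, which is presumably why the author elects to omit the details; I would include at most the periodic derivation as a representative sample and state that the other two follow identically. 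Since the excerpt explicitly says the proofs are analogous and omitted, I would keep the proof to a single short paragraph recording these substitutions.
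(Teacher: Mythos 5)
Your proposal is correct and is exactly the argument the paper has in mind: the paper omits the proof, stating only that it is analogous to the unweighted Warnock computation, and your expansion of the square with the coordinatewise elementary integrals (e.g. $\tfrac12-|z-z'|+|z-z'|^2$ for the periodic pair term, $\min\{z,z'\}-zz'$ and $z(1-z)/2$ for the extreme case, $12^{-d}$ and $3^{-d}$ for the volume terms) is precisely that derivation with the weights carried along. No discrepancies to report.
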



Note the similarities to \eqref{eq:Warnock}. The formulas in the proposition can be interpreted as a sort of energy for particles in $[0, 1)^d$. Here $X$ describes the positions of the particles and the weight $w$ some property of each particle (think charge). Using this interpretation, we can actually generalize it even further by taking these Warnock-type formulas as a starting point and using different potentials. Since we will only be interested in extreme and periodic discrepancy, we will discard the star discrepancy from now on. We start with the following definition.

\begin{defi} \label{def:energy_triple}
	An \textbf{energy triple} $\cT$ is given by three functions $(\etp, \eoe, \ete)$ having the following properties:
	\begin{itemize}
		\item [(i)] $\etp: [0, 1)^2 \ra \bbR$ is symmetric (i.e. $\etp(s, t) = \etp(t, s)$ for all $s, t \in [0,1 )$) and for all $M \in \bbN$ and all $m \in \{0, 1, \dots, N-1\}$ the sum
		$$
		\sum_{n=0}^{M-1} \etp\left(\frac{m}{M}, \frac{n}{M}\right)
		$$
		is a constant $T = T(M)$ independent on $m$ (but can depend on $M$). We will set $\eta(s) \coloneqq \etp(s, 0)$.
		
		\item [(ii)] $\eoe: [0, 1) \ra \bbR, \eoe(s) \coloneqq \eta(0) - \eta(s)$.
		
		\item [(iii)] $\ete: [0, 1)^2 \ra \bbR, \ete(s, t) \coloneqq \eta(0)-\eta(s)-\eta(t)+\etp(s, t)$.
	\end{itemize}
\end{defi}

An energy triple should be seen as a generalization of the terms involved in the expression for extreme and periodic $L_2$-discrepancy. Here $\etp$ is the potential between two points analogous to the periodic $L_2$-discrepancy and $\eoe$ and $\ete$ are generalizations of the potentials involved in the extreme $L_2$-discrepancy ($\eoe$ gives the interaction of a particle with the boundary of $[0, 1)^d$ and $\ete$ the interacion among particle pairs). The precise connection will be given in Example \ref{ex:discrepancy_as_energy} via Definition \ref{def:F_energy}.

Algebraically, these notions are defined in a way as to get a statement as general as possible in Proposition \ref{thm:expression_for_excess} (which will show that under these relations, the quantities defined in Definition \ref{def:F_energy}, generalizing $L_2$-discrepancy, are related in a way similar to the one given in Theorem \ref{thm:Lp_Le_permutation}). Clearly, an energy triple is determined uniquely by the function $\etp$. There are actually a lot of examples of energy triples we can construct thanks to the following.

\begin{prop} \label{prop:energy_triple_canonical_construction}
	Let $\eta: [0, 1) \ra \bbR$ be a function with $\eta(s) = \eta(1-s)$ for all $s \in (0, 1)$. Then $\etp(s, t) = \eta(|s-t|)$ defines an energy triple.
\end{prop}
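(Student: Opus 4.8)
The plan is to verify that $\etp(s,t)=\eta(|s-t|)$ satisfies the three conditions of Definition \ref{def:energy_triple}, the only nontrivial one being condition (i): symmetry is immediate since $|s-t|=|t-s|$, and conditions (ii), (iii) merely define $\eoe$ and $\ete$ in terms of $\etp$ (and $\eta(s)=\etp(s,0)=\eta(|s|)=\eta(s)$, which is consistent). So the whole content is the row-sum property: for every $M\in\bbN$ and every $m\in\{0,\dots,M-1\}$, the sum
$$
\sum_{n=0}^{M-1}\eta\!\left(\left|\frac{m}{M}-\frac{n}{M}\right|\right)
=\sum_{n=0}^{M-1}\eta\!\left(\frac{|m-n|}{M}\right)
$$
is independent of $m$.

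First I would rewrite the sum by substituting $j=n-m$; as $n$ runs over $\{0,\dots,M-1\}$, $j$ runs over $\{-m,\dots,M-1-m\}$, so
$$
\sum_{n=0}^{M-1}\eta\!\left(\frac{|m-n|}{M}\right)=\sum_{j=-m}^{M-1-m}\eta\!\left(\frac{|j|}{M}\right).
$$
The key step is to exploit the symmetry hypothesis $\eta(s)=\eta(1-s)$ to fold the negative indices onto positive ones. For $1\le j\le m$ we have $\eta(|{-j}|/M)=\eta(j/M)=\eta(1-j/M)=\eta((M-j)/M)$, and as $j$ ranges over $\{1,\dots,m\}$ the index $M-j$ ranges over $\{M-m,\dots,M-1\}$. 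Hence the terms with negative $j$ exactly match, after relabelling, the terms one would need to extend the positive range from $\{0,\dots,M-1-m\}$ up to $\{0,\dots,M-1\}$. Putting this together gives
$$
\sum_{j=-m}^{M-1-m}\eta\!\left(\frac{|j|}{M}\right)=\sum_{j=0}^{M-1}\eta\!\left(\frac{j}{M}\right),
$$
which is manifestly independent of $m$, so $T(M)=\sum_{j=0}^{M-1}\eta(j/M)$.

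The only point requiring a little care — and the main (minor) obstacle — is the bookkeeping at the endpoints of the index ranges and the case $m=0$: one should check that no term is double-counted or omitted when folding, in particular whether the value $\eta(0)$ or $\eta(1/2)$ (for even $M$) plays a special role. Since $\eta$ need only be defined on $[0,1)$ and the hypothesis $\eta(s)=\eta(1-s)$ is assumed for $s\in(0,1)$, the argument $j/M$ for $1\le j\le M-1$ always lies in $(0,1)$ and the reflection is legitimate; the term $j=0$ contributes $\eta(0)$ on both sides and causes no issue. A clean way to present this is to observe that $j\mapsto \eta(|j|/M)$ on $\bbZ$ descends to a well-defined function on $\bbZ/M\bbZ$ (precisely because $\eta(|j|/M)=\eta((M-|j|)/M)$ identifies $j$ with $j+M$ up to the reflection), so the sum over any block of $M$ consecutive residues is the same; the substitution $j=n-m$ shows our sum is exactly such a block, completing the proof.
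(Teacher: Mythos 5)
Your proof is correct and follows essentially the same route as the paper: split off the terms where the argument would be negative, apply the reflection symmetry $\eta(s)=\eta(1-s)$ to relabel them onto the top of the index range, and conclude that the sum equals $\sum_{j=0}^{M-1}\eta(j/M)$ independently of $m$. Your additional remark that $j\mapsto\eta(|j|/M)$ descends to $\bbZ/M\bbZ$ is a nice conceptual repackaging of the same computation, but the substance is identical.
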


The function $\eta$ is then also given as in Definition \ref{def:energy_triple} (i), but note that not all energy triples are of this form (for example $\etp(s, t) = \sin(2\pi s) \sin(2\pi t)$ would give an energy triple according to the above definition, with $T \equiv 0$, but not of this form). If an energy triple is of this form we will say it comes from the \textbf{canonical construction via $\eta$}.

\begin{proof} [Proof of Proposition \ref{prop:energy_triple_canonical_construction}]
	By definition it is clear that $\etp$ is symmetric, so it remains to verify the sum property. Indeed, let $m \in \{0, 1, \dots, M-1\}$ and note that
	\begin{align*}
		\sum_{n=0}^{M-1} \etp\left(\frac{m}{M}, \frac{n}{M}\right) & = \sum_{n=0}^{M-1} \eta\left(\frac{|m-n|}{M}\right) = \sum_{n=0}^{m-1} \eta\left(\frac{m-n}{M}\right) + \sum_{n=m}^{M-1} \eta\left(\frac{n-m}{M}\right) \\
		& = \sum_{n=0}^{m-1} \eta\left(1 - \frac{m-n}{M}\right) + \sum_{n'=0}^{M-n-1} \eta\left(\frac{n'}{M}\right) \\
		& = \sum_{n=0}^{m-1} \eta\left(\frac{n + (M-m)}{M}\right) + \sum_{n'=0}^{M-m-1} \eta\left(\frac{n'}{M}\right) \\
		& = \sum_{n'=M-m}^{M-1} \eta\left(\frac{n'}{M}\right) + \sum_{n'=0}^{M-m-1} \eta\left(\frac{n'}{M}\right) = \sum_{n'=0}^{M-1} \eta\left(\frac{n'}{M}\right)
	\end{align*}
	is independent of $m$ ($n'$ denotes where we relabeled the dummy variable $n$).
\end{proof}

As a shorthand for $\bx, \by \in [0, 1)^d$ we set
$$
\etp(\bx, \by) \coloneqq \prod_{k=1}^d \etp(x_k, y_k), \, \eoe(\bx) \coloneqq \prod_{k=1}^d \eoe(x_k), \, \ete(\bx, \by) \coloneqq \prod_{k=1}^d \ete(x_k, y_k).
$$
From now on fix $M, d \in \bbN, d \geq 2$. In what follows we will consider the \textbf{$d$-dimensional discretized torus} $G \coloneqq \frac{1}{M} \{0, 1, \dots, M-1\}^d \sbse [0, 1)^d$. The following is an abstraction of the formulas for extreme and periodic $L_2$-discrepancies of weighted point sets as given in Proposition \ref{prop:weighted_Warnock}.

\begin{defi} \label{def:F_energy}
	Given an energy triple $\cT = (\etp, \eoe, \ete)$ we define the \textbf{extreme/periodic $\cT$-energy} of $w: G \ra \bbR$ by
	\begin{align*}
		\ETe(w) \coloneqq - 2 \left(\sum_{\bx \in G} w(\bx)\right) \sum_{\bx \in G} w(\bx) \eoe(\bx) + \sum_{\bx, \by \in G} w(\bx) w(\by) \ete(\bx, \by)
	\end{align*}
	and
	\begin{align*}
		\ETp(w) \coloneqq \sum_{\bx, \by \in G} w(\bx) w(\by) \etp(\bx, \by)
	\end{align*}
	respectively.
\end{defi}

\begin{ex} \label{ex:discrepancy_as_energy}
	As an example consider the canonically constructed energy triple $\cT = (\etp, \eoe, \ete)$ via the function $\eta(s) = \frac{1}{2} - s + s^2$ (clearly fulfilling $\eta(s) = \eta(1-s)$). We thus have
	$$
	\etp(s, t) = \frac{1}{2} - |s-t| + |s-t|^2, \eoe(s) = s(1-s), \ete(s, t) = 2\left(\min\{s, t\} - st\right).
	$$
	Comparing Proposition \ref{prop:weighted_Warnock} and Definition \ref{def:F_energy} we see that for all $w: G \ra \bbR$ it holds
	$$
	\Lp(G, w)^2 = -\frac{\left(\sum_{\bx \in G} w(\bx)\right)^2}{3^d} + \ETp(w)
	$$
	and
	$$
	\Le(G, w)^2 = \frac{\left(\sum_{\bx \in G} w(\bx)\right)^2}{12^d} + 2^{-d} \ETe(w).
	$$
	Thus $\cT$-energy can be seen as a generalization of discrepancy of weights on $G$.
\end{ex}

We now make some preparations for our central proposition in this section. We start with another definition.

\begin{defi} \label{def:excess}
	Let $\cT$ be an energy triple and $w: G \ra \bbR$. Define its \textbf{excess} by $\exT(w) \coloneqq \ETp(w) - \ETe(w)$.
\end{defi}

Note that for the energy triple given by periodic and extreme $L_2$-discrepancy as in Example \ref{ex:discrepancy_as_energy} and for $d=2$, the excess is (up to an additive constant) the term on the left-hand side of the expression in Theorem \ref{thm:Lp_Le_permutation}. For $r \in \bbR$ consider the weight $w_r: G \ra \bbR$ given by $w_r(\bx) = \frac{r}{M}$.

\begin{lem} \label{lem:excess_of_constant_weight}
	$\exT(w_r) = M^{d-2}\left(T^d + (M\eta(0)-T)^d\right) r^2$ (recall Definition \ref{def:energy_triple} (i) for the definitions of $T$ and $\eta$).
\end{lem}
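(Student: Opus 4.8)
The plan is a direct computation that exploits two structural features at once: every potential attached to an energy triple is by construction a product over the $d$ coordinates, and $G = \frac1M\{0,1,\dots,M-1\}^d$ is a product grid. Consequently every double sum over $G\times G$ (and every single sum over $G$) occurring in $\ETp(w_r)$ and $\ETe(w_r)$ factorizes into the $d$-th power of the corresponding one-dimensional sum over $\{0,\dots,M-1\}^2$ (resp.\ over $\{0,\dots,M-1\}$). Since $w_r\equiv r/M$ is constant it can be pulled out of all these sums, and moreover $\sum_{\bx\in G}w_r(\bx) = M^{d-1}r$; so the whole statement reduces to evaluating a handful of purely combinatorial one-dimensional sums and then keeping track of the powers of $M$.

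The one-dimensional sums I would compute are these. Putting $m=0$ in Definition~\ref{def:energy_triple}(i) and using symmetry of $\etp$ together with $\eta(s) = \etp(s,0)$ gives $\sum_{n=0}^{M-1}\eta\!\left(\tfrac nM\right) = \sum_{n=0}^{M-1}\etp\!\left(0,\tfrac nM\right) = T$; this identification of $\sum_n\eta(n/M)$ with $T$ is the only point that needs a little care. Summing the identity in Definition~\ref{def:energy_triple}(i) over $m$ gives $\sum_{m,n=0}^{M-1}\etp\!\left(\tfrac mM,\tfrac nM\right) = MT$. From $\eoe(s) = \eta(0)-\eta(s)$ one then gets $\sum_{n=0}^{M-1}\eoe\!\left(\tfrac nM\right) = M\eta(0)-T$, and from $\ete(s,t) = \eta(0)-\eta(s)-\eta(t)+\etp(s,t)$ one gets $\sum_{m,n=0}^{M-1}\ete\!\left(\tfrac mM,\tfrac nM\right) = M^2\eta(0) - 2MT + MT = M(M\eta(0)-T)$.

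Feeding these into Definition~\ref{def:F_energy} with $w=w_r$, the factorization gives $\ETp(w_r) = \tfrac{r^2}{M^2}(MT)^d = r^2 M^{d-2}T^d$, while the two terms of $\ETe(w_r)$ become $-2M^{d-1}r\cdot\tfrac rM(M\eta(0)-T)^d$ and $\tfrac{r^2}{M^2}\bigl(M(M\eta(0)-T)\bigr)^d$, whose sum is $-r^2 M^{d-2}(M\eta(0)-T)^d$. Subtracting, $\exT(w_r) = \ETp(w_r)-\ETe(w_r) = M^{d-2}\bigl(T^d + (M\eta(0)-T)^d\bigr)r^2$, as claimed. There is no genuine obstacle here; the only thing to watch is the bookkeeping of the powers of $M$ arising from the $d$-fold factorization and from the normalization $w_r = r/M$, and the small observation that $\sum_{n}\eta(n/M)=T$.
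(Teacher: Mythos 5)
Your proposal is correct and follows essentially the same route as the paper: pull the constant weight $r/M$ out, factorize each sum over $G$ or $G\times G$ into the $d$-th power of a one-dimensional sum, evaluate those sums via Definition \ref{def:energy_triple}(i) (including the observation $\sum_{n}\eta(n/M)=T$, which the paper also uses, implicitly), and combine. All the powers of $M$ check out, so there is nothing to add.
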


\begin{proof}
	We need to compute $\ETp(w_r)$ and $\ETe(w_r)$. Write
	$$
	\ETp(w_r) = \frac{r^2}{M^2} \sum_{\bx, \by \in G} \etp(\bx, \by) = \frac{r^2}{M^2} \sum_{\bx, \by \in G} \prod_{k=1}^d \etp(x_k, y_k)
	$$
	and note that the sum factorizes as
	$$
	\ETp(w_r) = \frac{r^2}{M^2} \prod_{k=1}^d \left(\sum_{m_k, n_k = 0}^{M-1} \etp\left(\frac{m_k}{M}, \frac{n_k}{M}\right)\right) = \frac{r^2}{M^2} \left(\sum_{m, n = 0}^{M-1} \etp\left(\frac{m}{M}, \frac{n}{M}\right)\right)^d.
	$$
	By Definition \ref{def:energy_triple} (i) we have
	$$
	\sum_{m=0}^{M-1} \sum_{n=0}^{M-1} \etp\left(\frac{m}{M}, \frac{n}{M}\right) = \sum_{m=0}^{M-1} T = MT
	$$
	and we get
	$$
	\ETp(w_r) = M^{d-2} T^d r^2.
	$$
	As for $\ETe(w_r)$ we have
	\begin{align*}
		\ETe(w_r) & = - 2 rM^{d-1} \cdot \frac{r}{M}\sum_{\bx \in G} \eoe(\bx) + \frac{r^2}{M^2} \sum_{\bx, \by \in G} \ete(\bx, \by) \\
		& = - 2 r^2M^{d-2} \sum_{\bx \in G} \prod_{k=1}^d \eoe(x) + \frac{r^2}{M^2} \sum_{\bx, \by \in G} \prod_{k=1}^d \ete(x_k, y_k)
	\end{align*}
	and factorization again yields
	$$
	\sum_{\bx \in G} \prod_{k=1}^d \eoe(x_k) = \prod_{k=1}^d \left(\sum_{m_k=0}^{M-1} \eta(0) - \eta\left(\frac{m_k}{M}\right)\right) = (M\eta(0) - T)^d
	$$
	and
	\begin{align*}
		\sum_{\bx, \by \in G} \prod_{k=1}^d \ete(x_k, y_k) & = \prod_{k=1}^d \left(\sum_{m_k, n_k = 0}^{M-1} \left[\eta(0) - \eta\left(\frac{m_k}{M}\right) - \eta\left(\frac{n_k}{M}\right) + \etp\left(\frac{m_k}{M}, \frac{n_k}{M}\right)\right]\right) \\
		& = (M^2\eta(0) - MT - MT + MT)^d = M^d (M\eta(0)-T)^d.
	\end{align*}
	Combining these we get
	$$
	\ETe(w_r) = -M^{d-2} (M\eta(0)-T)^d r^2,
	$$
	so that
	$$
	\exT(w_r) = \ETp(w_r) - \ETe(w_r) = M^{d-2}\left(T^d + (M\eta(0)-T)^d\right) r^2.
	$$
\end{proof}

For fixed $k \in [d] \coloneqq \{1, 2, \dots, d\}$ and coordinates $m_1, \dots, m_{k-1}, m_{k+1}, \dots, m_d \in \{0, 1, \dots, M-1\}$ the set
$$
R = \left\{\frac{1}{M}(m_1, \dots, m_d): m_k = 0, 1, \dots, M-1\right\} \subseteq G
$$
is called a \textbf{$k$-row} or simply \textbf{row}. For any $k$, the set of all $k$-rows forms a partition of $G$. For $\bx \in G$ and $k \in [d]$ we can thus define $R_k(\bx)$ to be the unique $k$-row containing $\bx$. Any given $\bx$ is contained in exactly $d$ rows, namely $R_1(\bx), \dots, R_d(\bx)$.

We will now define coefficients for a certain expression $g_R$ below. These coefficients will be such that we can get a relation as in Proposition \ref{thm:expression_for_excess} below. From the proof it will be clear how these coefficients have to be defined but without that foresight they will come out of the blue. We thus advice to skip the details for now and first look at the expression for $g_R$ (a linear functional in the variables $w(\bx)$ and $r$), then go into Proposition \ref{thm:expression_for_excess} to see why we are doing this. The form of the coefficients can then be extracted from the proof.

Define
\begin{align*} 
	\begin{split}
		\alpha^\bx_{R_k(\by)} = \sum_{\substack{P, Q \sbse [d] \\ k \notin Q}} &  \frac{\gamma(P, Q)}{2d-\#P-\#Q} \eta(0)^{d - \#(P \cup Q)} \\
		& \times \prod_{\ell \in P \sm Q} \eta(x_\ell) \prod_{\ell \in Q \sm P} \eta(y_\ell) \prod_{\ell \in P \cap Q} \etp(x_\ell, y_\ell),
	\end{split}
\end{align*}
and
\begin{align*} 
	\begin{split}
		\alpha^r_{R_k(\bx)} = \sum_{\substack{P, Q \sbse [d] \\ Q \neq [d] \\ k \notin P \sm Q}} & \frac{M^{d-1}}{d-\#(P \sm Q)} \frac{d-\#Q}{2d-\#P-\#Q} \left(\frac{T}{M}\right)^{\#Q} \gamma(P, Q) \\
		& \times \eta(0)^{d - \#(P \cup Q)} \prod_{\ell \in P \sm Q} \eta(x_\ell).
	\end{split}
\end{align*}
for $k \in [d]$ and $\bx, \by \in G$, where
\begin{align*}
	\gamma(P, Q) = \begin{cases}
		2 & , P=Q=\es \\
		0 & , P=Q=[d] \\
		0 & , P \neq \es, Q = \es \\
		0 & , P = \es, Q \neq \es \\
		-2(-1)^{\#P+\#Q} & , \text{else}
	\end{cases}
\end{align*}
and for any row $R$ of $G$ define the linear forms in the variables $w(\bx), \bx \in G$ and $r$ given by
$$
g^{}_R(w, r) \coloneqq \sum_{\bx \in G} \alpha^\bx_R w(\bx) + \alpha^r_R r.
$$
Note that $g_R(w, r)$ depends on all $w(\bx), \bx \in G$, not just on those points in $R$. Also, for $\alpha_R^\bx$ we choose a $\by$ such that $R = R_k(\by)$ and then use the above definition (and analogously for $\alpha_R^r$). It is important to emphasize that these yield well-defined quantities since $\alpha^\bx_{R_k(\by)}$ only depends on $x_1, \dots, x_d$ and $y_1, \dots, y_{k-1}, y_{k+1}, \dots, y_d$ (note that $R_k(\by) = R_k(\by')$ if and only if $\by$ and $\by'$ differ at most in their $k$-th coordinate) and analogously for $\alpha^r_{R_k(\bx)}$. We have the following general result.

\begin{prop} \label{thm:expression_for_excess}
	Let $M, d \in \bbN, d \geq 2$, $\cT$ be an energy triple and $r \in \bbR$. For $w: G \ra \bbR$ it holds
	\begin{align} \label{eq:excess}
		\exT(w) - \exT(w_r) = \sum_{R \text{ row}} g^{}_R(w, r) \left(\sum_{\bx \in R} w(\bx) - r\right),
	\end{align}
	where the sum runs over all rows of $G$ (and $w_r$ as in Lemma \ref{lem:excess_of_constant_weight}).
\end{prop}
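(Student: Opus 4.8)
The plan is to read \eqref{eq:excess} as an identity of quadratic forms in the variables $(w(\bx))_{\bx\in G}$ and $r$, and to verify it by matching, separately, the part quadratic in $w$, the bilinear $wr$-part, and the $r^2$-part. The common starting point is an explicit normal form for $\exT(w)$. Inserting the definitions into $\exT(w)=\ETp(w)-\ETe(w)$ and symmetrising the mixed term (the term $-2(\sum_\bx w(\bx))\sum_\bx w(\bx)\eoe(\bx)$ of $\ETe$ equals $-\sum_{\bx,\by}w(\bx)w(\by)(\eoe(\bx)+\eoe(\by))$) gives $\exT(w)=\sum_{\bx,\by\in G}w(\bx)w(\by)K(\bx,\by)$ with the symmetric kernel $K(\bx,\by)=\etp(\bx,\by)+\eoe(\bx)+\eoe(\by)-\ete(\bx,\by)$. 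Expanding each of these four products coordinate by coordinate — each coordinate $\ell$ contributing one of $\eta(0),-\eta(x_\ell),-\eta(y_\ell),\etp(x_\ell,y_\ell)$ — and sorting the monomials by the pair $P=\{\ell:\text{an }x\text{- or }\etp\text{-factor is used}\}$, $Q=\{\ell:\text{a }y\text{- or }\etp\text{-factor is used}\}$ yields, after collecting signs,
$$K(\bx,\by)=\tfrac12\sum_{P,Q\sbse[d]}\gamma(P,Q)\,\eta(0)^{d-\#(P\cup Q)}\prod_{\ell\in P\sm Q}\eta(x_\ell)\prod_{\ell\in Q\sm P}\eta(y_\ell)\prod_{\ell\in P\cap Q}\etp(x_\ell,y_\ell),$$
with $\gamma$ (symmetric) exactly as in the statement: $\etp$ contributes $(P,Q)=([d],[d])$, the two $\eoe$-products contribute $Q=\es$ resp.\ $P=\es$, and $-\ete$ the generic value $-(-1)^{\#P+\#Q}$; the $2$ in $\gamma(\es,\es)$ is what survives the prefactor $\tfrac12$.

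With this in hand I would do the three matches. (i) $r^2$-part: the right-hand side of \eqref{eq:excess} contributes $-(\sum_{R}\alpha^r_R)r^2$, so one needs $\sum_{R\text{ row}}\alpha^r_R=M^{d-2}(T^d+(M\eta(0)-T)^d)$; this is $\exT(w_r)/r^2$ by Lemma~\ref{lem:excess_of_constant_weight}, and it follows by summing the definition of $\alpha^r_R$ over all rows and collapsing the inner sums with $\sum_n\etp(m/M,n/M)=T=\sum_n\eta(n/M)$ and $\sum_n\eta(0)=M\eta(0)$ from Definition~\ref{def:energy_triple}(i). (ii) $w^2$-part: since $\sum_{\bx\in R}w(\bx)$ is linear and each $\by$ lies in exactly the $d$ rows $R_1(\by),\dots,R_d(\by)$, the right-hand side contributes $\sum_{\bx,\by}w(\bx)w(\by)\sum_{k=1}^d\alpha^\bx_{R_k(\by)}$; only the symmetrisation in $(\bx,\by)$ being relevant, one must verify $\sum_{k=1}^d(\alpha^\bx_{R_k(\by)}+\alpha^\by_{R_k(\bx)})=2K(\bx,\by)$, which follows by inserting the definition of $\alpha^\bx$, interchanging the $k$- and $(P,Q)$-sums, using $\#\{k:k\notin Q\}=d-\#Q$ together with $(d-\#P)+(d-\#Q)=2d-\#P-\#Q$ to cancel the denominator, and invoking symmetry of $\gamma$ to compare with the expansion of $2K$ above. (iii) $wr$-part: the left-hand side has none, while the right-hand side contributes $r\sum_\bx w(\bx)(\sum_{k=1}^d\alpha^r_{R_k(\bx)}-\sum_{R\text{ row}}\alpha^\bx_R)$, so the required cancellation is the identity $\sum_{k=1}^d\alpha^r_{R_k(\bx)}=\sum_{R\text{ row}}\alpha^\bx_R$, again obtained by expanding both sides over $(P,Q)$ and over rows and summing out the free coordinates with the sum property — this is where the powers $(T/M)^{\#Q}$ and the factor $M^{d-1}$ in $\alpha^r_R$ arise.

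The obstacle is purely a matter of bookkeeping: the coefficients $\alpha^\bx_R,\alpha^r_R$ are reverse-engineered to make the three identities above come out, and the work lies in carrying the weights $\gamma(P,Q)$, the denominators $2d-\#P-\#Q$ and $d-\#(P\sm Q)$, the powers $\eta(0)^{d-\#(P\cup Q)}$, $(T/M)^{\#Q}$, and the row counts $d-\#P$, $d-\#Q$, $M^{d-1}$ consistently through several interchanges of summation — while noting that the constraints ``$k\notin Q$'' in $\alpha^\bx_R$ and ``$k\notin P\sm Q$'' in $\alpha^r_R$ precisely exclude the cases $Q=[d]$ and $P\sm Q=[d]$ where those denominators would vanish (there $\gamma$ vanishes too). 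An equivalent constructive route, presumably how the $\alpha$'s were discovered, avoids guessing: starting from the $(P,Q)$-expansion of $\exT(w)$, for each summand and each coordinate $k\notin Q$ (a $\by$-row rewriting) or $k\notin P$ (an $\bx$-row rewriting) one writes $\sum_{\by}w(\by)(\cdots)=\sum_{k\text{-rows }R}(\sum_{\by\in R}w(\by))(\cdots)$ and splits $\sum_{\by\in R}w(\by)=(\sum_{\by\in R}w(\by)-r)+r$, spreading the summand uniformly over its $2d-\#P-\#Q$ such rewritings; the ``$-r$'' parts assemble into $\sum_R(\sum_\bx\alpha^\bx_R w(\bx))(\sum_{\bx\in R}w(\bx)-r)$, a further analogous round of extraction applied to the ``$+r$'' parts (after the remaining coordinates in $Q$, resp.\ $P$, are summed out via the sum property) produces the $\alpha^r_R r(\sum_{\bx\in R}w(\bx)-r)$ terms, and the pure-$r^2$ remainder equals $\exT(w_r)$ by Lemma~\ref{lem:excess_of_constant_weight} — which is exactly \eqref{eq:excess}.
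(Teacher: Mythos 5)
Your proposal is correct and follows essentially the same route as the paper: both sides are treated as quadratic forms in $(w(\bx))_{\bx\in G}$ and $r$, the excess is expanded over pairs $(P,Q)\sbse[d]$ with the weights $\gamma(P,Q)$, and the identity is verified by matching the $w(\bx)w(\by)$, $w(\bx)r$, and $r^2$ coefficients exactly as in the paper's Cases 1--4 (your symmetric kernel $K$ is just the paper's coefficient \eqref{eq:wx_wy_product} divided by $2$, and your closing ``constructive route'' is the heuristic the paper itself sketches before its proof). No substantive difference.
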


We will be interested in weights such that every summand on the right-hand side of this expression is $0$ (see Theorem \ref{cor:Lp_Le_latin_hypercubes} below), which will show us that the excess of such a weight is constantly $\exT(w_r)$. For the proof we advice the following: consider the coefficients $\alpha_R^\bx$ and $\alpha_R^r$ as unknowns and see from the proof which relations they need to fulfil as to get the statement of the proposition. In this way, the expressions for the coefficients will turn up more naturally, taking the form of the expression from the proposition for granted. Indeed, case 1 in the proof will give the values for $\alpha_R^\bx$ (in particular equation \eqref{eq:gives_ax}) and case 3 will give $\alpha_R^r$ (in particular equation \eqref{eq:gives_ar}), while keeping in mind that $\alpha_{R_k(\by)}^\bx$ cannot depend on $y_k$ and $\alpha_{R_k(\bx)}^r$ cannot depend on $x_k$. The expression for $\gamma$ also follows from \eqref{eq:wx_wy_product} and \eqref{eq:gives_gamma} in case 1.

\begin{proof} [Proof of Proposition \ref{thm:expression_for_excess}]
	Note that both sides of \eqref{eq:excess} are homogeneous polynomials of degree $2$ in the variabls $w(\bx), \bx \in G$ and $r$. We thus show that both sides are equal by comparing coefficients. To do so we distinguish between four cases.
	
	\underline{Case 1, coefficient of $w(\bx)w(\by), \bx \neq \by$:} On the left-hand side only $\exT(w)$ contributes to coefficients of the form $w(\bx)w(\by)$. Concretely, on the left-hand side the coefficient is (looking at Definition \ref{def:F_energy})
	\begin{align} \label{eq:wx_wy_product}
		\begin{split}
			& 2 \prod_{\ell=1}^d \etp(x_\ell, y_\ell) + 2 \prod_{\ell=1}^d [\eta(0)-\eta(x_\ell)] + 2 \prod_{\ell=1}^d [\eta(0)-\eta(y_\ell)] \\
			& - 2 \prod_{\ell=1}^d [\eta(0) - \eta(x_\ell) - \eta(y_\ell) + \etp(x_\ell, y_\ell)]
		\end{split}
	\end{align}
	and on the right-hand side it is
	\begin{align} \label{eq:wx_wy_sum}
		\sum_{k=1}^d \alpha_{R_k(\bx)}^\by + \sum_{k=1}^d \alpha_{R_k(\by)}^\bx.
	\end{align}
	To show that these are the same we multiply out the products in \eqref{eq:wx_wy_product} to obtain
	$$
	\sum_{(A, B, C, D)} \Gamma_1(A, B, C, D) \eta(0)^{\#A} \prod_{\ell \in B} \eta(x_\ell) \prod_{\ell \in C} \eta(y_\ell) \prod_{\ell \in D} \etp(x_\ell, y_\ell)
	$$
	where the sum runs over all ordered partitions $(A, B, C, D)$ of $[d]$ (that is $[d] = A \dot{\cup} B \dot{\cup} C \dot{\cup} D$ disjointly with possibly empty blocks and where the order of the sets matters) and
	\begin{align*}
		\Gamma_1(A, B, C, D) = & 2 [D = [d]] + 2 (-1)^{\#B} [C = D = \es] + 2 (-1)^{\#C} [B = D = \es] \\
		& - 2 (-1)^{\#B + \#C} \\
		= & \begin{cases}
			2 & , A = [d] \\
			0 & , D = [d] \\
			0 & , A \neq [d], C=D=\es \\
			0 & , A \neq [d], B=D=\es \\
			-2(-1)^{\#B + \#C} & , \text{else}
		\end{cases},
	\end{align*}
	with the Iverson bracket $[\cdot]$, that is $[S] = 1$ if the statement $S$ in the bracket is true, otherwise $[S] = 0$ if $S$ is false. If we show that the coefficient of the term
	$$
	\eta(0)^{\#A} \prod_{\ell \in B} \eta(x_\ell) \prod_{\ell \in C} \eta(y_\ell) \prod_{\ell \in D} \etp(x_\ell, y_\ell)
	$$
	in \eqref{eq:wx_wy_sum} is precisely $\Gamma_1(A, B, C, D)$ for any ordered partition $(A, B, C, D)$ we are done with this case. Observe the bijection
	\begin{align*}
		\{(P, Q): P, Q \sbse [d]\} & \ra \{(A, B, C, D) \text{ ordered set partition of } [d]\} \\
		(P, Q) & \mapsto ([d] \sm (P \cup Q), P \sm Q, Q \sm P, P \cap Q)
	\end{align*}
	with inverse given by
	$$
	(A, B, C, D) \mapsto (B \cup D, C \cup D).
	$$
	Under this bijection we also have
	\begin{align} \label{eq:gives_gamma}
		\Gamma_1(A, B, C, D) = \gamma(B \cup D, C \cup D)
	\end{align}
	for all ordered partitions $(A, B, C, D)$. We thus need to show
	\begin{align} \label{eq:gives_ax}
		\begin{split}
			\sum_{k=1}^d \alpha_{R_k(\bx)}^\by + \sum_{k=1}^d \alpha_{R_k(\by)}^\bx = \sum_{P, Q \sbse [d]} & \gamma(P, Q) \eta(0)^{d - \#(P \cup Q)} \\
			& \times \prod_{\ell \in P \sm Q} \eta(x_\ell) \prod_{\ell \in Q \sm P} \eta(y_\ell) \prod_{\ell \in P \cap Q} \etp(x_\ell, y_\ell).
		\end{split}
	\end{align}
	Fix $(P, Q)$ and we will distribute the term
	$$
	\gamma(P, Q) \eta(0)^{d - \#(P \cup Q)} \prod_{\ell \in P \sm Q} \eta(x_\ell) \prod_{\ell \in Q \sm P} \eta(y_\ell) \prod_{\ell \in P \cap Q} \etp(x_\ell, y_\ell)
	$$
	onto the coefficients $\alpha_{R_k(\bx)}^\by$ and $\alpha_{R_k(\by)}^\bx$ whenever possible (that is when $x_k$ or $y_k$ does not show up in this product). Since $\gamma([d], [d]) = 0$ we may ignore the term with the expression for $(P, Q) = ([d], [d])$, so that for every such expression there is a variable $x_k$ or $y_k$ which does not show up in it. In total, this product will get included in $\alpha_{R_k(\bx)}^\by$ for all $k \notin P$, and it will get included in $\alpha_{R_k(\by)}^\bx$ for all $k \notin Q$. Thus, the product will get distributed onto
	$$
	(d - \#P) + (d - \# Q) = 2d-\#P-\#Q
	$$
	coefficients, meaning that we need to divide by this amount to normalize it appropriately. This gives the expressions for $\alpha_{R_k(\by)}^\bx$ and finishes this case.
	
	\underline{Case 2, coefficient of $w(\bx)^2, \bx \in G$:} Similar to the first case, we now have to show the equality between
	\begin{align*} 
		\begin{split}
			& \prod_{\ell=1}^d \etp(x_\ell, x_\ell) + 2 \prod_{\ell=1}^d [\eta(0)-\eta(x_\ell)]  - \prod_{\ell=1}^d [\eta(0) - 2 \eta(x_\ell) + \etp(x_\ell, x_\ell)]
		\end{split}
	\end{align*}
	and
	\begin{align*} 
		\sum_{k=1}^d \alpha_{R_k(\bx)}^\bx.
	\end{align*}
	The equality of these two expressions follows by case 1 where we set $\by = \bx$.

	\underline{Case 3, coefficient of $w(\bx) r, \bx \in G$:} Neither $\exT(w)$ nor $\exT(w_r)$ contain any terms of the form $w(\bx)r$, giving a $0$ on the left-hand side of \eqref{eq:excess}. From the right-hand side we get
	$$
	\sum_{k=1}^d \alpha_{R_k(\bx)}^r - \sum_{R \text{ row}} \alpha_R^\bx
	$$
	and thus it remains to verify
	\begin{align} \label{eq:wx_r}
		\sum_{k=1}^d \alpha_{R_k(\bx)}^r = \sum_{R \text{ row}} \alpha_R^\bx
	\end{align}
	for all $x \in G$. For the right-hand side note that we can rewrite the sum as
	\begin{align*} 
		\sum_{R \text{ row}} \alpha_R^\bx = \frac{1}{M} \sum_{y \in G} \sum_{k = 1}^d \alpha_{R_k(\by)}^\bx
	\end{align*}
	just by noting that in the double sum every row is summed over $M$-times. We can also rewrite the following sum as
	\begin{align*}
		\sum_{k = 1}^d \alpha_{R_k(\by)}^\bx = & \sum_{k=1}^d \sum_{\substack{P, Q \sbse [d] \\ k \notin Q}} \frac{\gamma(P, Q)}{2d-\#P-\#Q} \eta(0)^{d - \#(P \cup Q)} \\
		& \times \prod_{\ell \in P \sm Q} \eta(x_\ell) \prod_{\ell \in Q \sm P} \eta(y_\ell) \prod_{\ell \in P \cap Q} \etp(x_\ell, y_\ell) \\
		= & \sum_{\substack{P, Q \sbse [d] \\ Q \neq [d]}} \frac{(d - \#Q)\gamma(P, Q)}{2d-\#P-\#Q} \eta(0)^{d - \#(P \cup Q)} \\
		& \times \prod_{\ell \in P \sm Q} \eta(x_\ell) \prod_{\ell \in Q \sm P} \eta(y_\ell) \prod_{\ell \in P \cap Q} \etp(x_\ell, y_\ell).
	\end{align*}
	Noting now
	\begin{align*}
		& \sum_{y \in G} \prod_{\ell \in [d] \sm Q} 1 \prod_{\ell \in Q \sm P} \eta(y_\ell) \prod_{\ell \in P \cap Q} \etp(x_\ell, y_\ell) \\
		= & \prod_{\ell \in [d] \sm Q} \left(\sum_{m=0}^{M-1} 1\right) \prod_{\ell \in Q \sm P} \left(\sum_{m=0}^{M-1} \eta\left(\frac{m}{M}\right)\right) \prod_{\ell \in P \cap Q} \left(\sum_{m=0}^{M-1} \etp\left(x_\ell, \frac{m}{M}\right)\right) \\
		= & M^{d - \#Q} T^{\#Q}
	\end{align*}
	we get
	\begin{align*}
		\sum_{\by \in G} \sum_{k = 1}^d \alpha_{R_k(\by)}^\bx = & \sum_{\substack{P, Q \sbse [d] \\ Q \neq [d]}} \frac{(d - \#Q)\gamma(P, Q)}{2d-\#P-\#Q} \eta(0)^{d - \#(P \cup Q)} \\
		& \times \prod_{\ell \in P \sm Q} \eta(x_\ell) \sum_{y \in G} \prod_{\ell \in Q \sm P} \eta(y_\ell) \prod_{\ell \in P \cap Q} \etp(x_\ell, y_\ell) \\
		= & M^d \sum_{\substack{P, Q \sbse [d] \\ Q \neq [d]}} \frac{d-\#Q}{2d-\#P-\#Q} \left(\frac{T}{M}\right)^{\#Q} \gamma(P, Q) \\
		& \times \eta(0)^{d - \#(P \cup Q)} \prod_{\ell \in P \sm Q} \eta(x_\ell).
	\end{align*}
	It remains to verify
	\begin{align} \label{eq:gives_ar}
		\begin{split}
			\sum_{k=1}^d \alpha_{R_k(\bx)}^r = M^{d-1} \sum_{\substack{P, Q \sbse [d] \\ Q \neq [d]}} & \frac{d-\#Q}{2d-\#P-\#Q} \left(\frac{T}{M}\right)^{\#Q} \gamma(P, Q) \\
			& \times \eta(0)^{d - \#(P \cup Q)} \prod_{\ell \in P \sm Q} \eta(x_\ell).
		\end{split}
	\end{align}
	Note that the sum on the right-hand side can be written as (using $\gamma([d], \es) = 0$, so discarding the summand for $(P, Q) = ([d], \es)$)
	\begin{align*}
		\sum_{k=1}^d \sum_{\substack{P, Q \sbse [d] \\ Q \neq [d] \\ k \notin P \sm Q}} \frac{M^{d-1}}{d-\#(P \sm Q)} & \frac{d-\#Q}{2d-\#P-\#Q} \left(\frac{T}{M}\right)^{\#Q} \gamma(P, Q) \\
		& \times \eta(0)^{d - \#(P \cup Q)} \prod_{\ell \in P \sm Q} \eta(x_\ell).
	\end{align*}
	But then \eqref{eq:wx_r} follows simply by the definition of $\alpha_{R_k(\bx)}^r$ (again, as in case 1, we can see which terms can get included in a given coefficient $\alpha_{R_k(\bx)}^r$ and distribute them onto all possible such coefficients and dividing at the end by the corresponding amount to normalize).
	
	\underline{Case 4, coefficient of $r^2$:} For the last case, the left-hand side of \eqref{eq:excess} yields, by Lemma \ref{lem:excess_of_constant_weight}, a coefficient of
	$$
	-M^{d-2}\left(T^d + (M\eta(0)-T)^d\right)
	$$
	for $r^2$. On the right-hand side we get
	$$
	-\sum_{R \text{ row}} \alpha_R^r
	$$
	and it remains to show
	\begin{align} \label{eq:r_r}
		M^{d-2}\left(T^d + (M\eta(0)-T)^d\right) = \sum_{R \text{ row}} \alpha_R^r.
	\end{align}
	For this we use what we have already proven in the previous cases. By case 3 we can write
	$$
	\sum_{R \text{ row}} \alpha_R^r = \frac{1}{M} \sum_{\bx \in G} \sum_{k = 1}^d \alpha_{R_k(\bx)}^r = \frac{1}{M} \sum_{\bx \in G} \sum_{R \text{ row}} \alpha_R^\bx = \frac{1}{M^2} \sum_{\bx \in G} \sum_{\by \in G} \sum_{k = 1}^d \alpha_{R_k(\by)}^\bx
	$$
	which gives us
	$$
	2 \sum_{R \text{ row}} \alpha_R^r = \frac{1}{M^2} \sum_{\bx \in G} \sum_{\by \in G} \left(\sum_{k = 1}^d \alpha_{R_k(\by)}^\bx + \sum_{k = 1}^d \alpha_{R_k(\bx)}^\by\right).
	$$
	By cases 1 and 2 we can write, no matter if $\bx \neq \by$ or $\bx = \by$,
	\begin{align*}
		& \sum_{k = 1}^d \alpha_{R_k(\by)}^\bx + \sum_{k = 1}^d \alpha_{R_k(\bx)}^\by \\
		= & 2 \prod_{\ell=1}^d \etp(x_\ell, y_\ell) + 2 \prod_{\ell=1}^d [\eta(0)-\eta(x_\ell)] + 2 \prod_{\ell=1}^d [\eta(0)-\eta(y_\ell)] \\
		& - 2 \prod_{\ell=1}^d [\eta(0) - \eta(x_\ell) - \eta(y_\ell) + \etp(x_\ell, y_\ell)].
	\end{align*}
	Now \eqref{eq:r_r} follows from the identities
	$$
	\sum_{\bx \in G} \sum_{\by \in G} \prod_{\ell=1}^d \etp(x_\ell, y_\ell) = M^d T^d,
	$$
	$$
	\sum_{\bx \in G} \sum_{\by \in G} \prod_{\ell=1}^d [\eta(0)-\eta(x_\ell)] = \sum_{\bx \in G} \sum_{\by \in G} \prod_{\ell=1}^d [\eta(0)-\eta(y_\ell)] = M^d (M\eta(0)-T)^d,
	$$
	$$
	\sum_{\bx \in G} \sum_{\by \in G} \prod_{\ell=1}^d [\eta(0) - \eta(x_\ell) - \eta(y_\ell) + \etp(x_\ell, y_\ell)] = M^d (M\eta(0)-T)^d,
	$$
	which we have already shown in the proof of Lemma \ref{lem:excess_of_constant_weight}. This finishes this case and also the proof of Proposition \ref{thm:expression_for_excess}.
\end{proof}

The above proposition might not seem like more than a curious algebraic identity at first, but it has some interesting implications for the extreme and periodic discrepancy of certain point sets, to be discussed in the next section. In essence, it is a more general and abstract version of Theorem \ref{thm:Lp_Le_permutation}.

\section{Weak Latin hypercubes}

Latin hypercubes are extensions of permutation sets as introduced in Definition \ref{def:permutation_set}, that have been studied for quite some time now (see for example \cite{GH21, HL98, Loh96, Mat00, PM05, Ste87} for a small selection). We will need an adapted version which we choose to call \emph{weak Latin hypercubes}. We start by introducing this notion.

A \textbf{weak $M$-Latin hypercube} (or just \textbf{weak Latin hypercube}) of dimension $d$ is a subset $\cH \sbse \{0, 1, \dots, M-1\}^d$ such that for any $k \in [d]$ and any $m_1, \dots, m_{k-1}, m_{k+1}, \dots, m_d \in \{0, 1, \dots, M-1\}$ there is exactly one $m_k \in \{0, 1, \dots, M-1\}$ with $(m_1, \dots, m_d) \in \cH$. That is to say, $\cH$ contains exactly one element of each row of $G$. Every weak $M$-Latin hypercube of $G$ generates a point set $X(\cH) \coloneqq \frac{1}{M} \cH$ ($\cH$ scaled into the unit cube $[0, 1)^d$) of size $\# X(\cH) = M^{d-1}$. For $d=2$ we recover permutation sets as in Definition \ref{def:permutation_set}.

In contrast, a Latin hypercube (as in the literature above) is a set $X \sbse G$ such that every \emph{hyperplane} given by $\{\bx \in [0,1 )^d: x_k = i/N\}$ for some $k \in [d], i \in \{0, 1, \dots, N-1\}$ contains exactly one element from $X$. We want to emphasize that weak Latin hypercubes really are a different class of point sets (for $d \geq 3$). What we call weak Latin hypercube in this paper follows more the combinatorial definition of a Latin hypercube given in \cite{MW08}.

There is a canoncial bijection between weights $w: G \ra \{0, 1\}$ having row sums all equal to $1$ and point sets $X$ constructed from weak Latin hypercubes in the above way, namely
$$
w \mapsto X = \{\bx \in G: w(\bx) = 1\}.
$$
We can thus apply Proposition \ref{thm:expression_for_excess} to conclude the following.

\begin{thm} \label{cor:Lp_Le_latin_hypercubes}
	If $X \sbse [0, 1)^d$ is constructed from a weak $M$-Latin hypercube then
	$$
	\Lp(X)^2 - 2^d \Le(X)^2 = \frac{(2M^2+1)^d + (M^2-1)^d - (1+2^d)M^{2d}}{6^d M^2}.
	$$
\end{thm}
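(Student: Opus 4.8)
The plan is to apply Proposition~\ref{thm:expression_for_excess} with the specific energy triple from Example~\ref{ex:discrepancy_as_energy}, i.e.\ the canonically constructed triple via $\eta(s) = \tfrac12 - s + s^2$, and with the weight $w: G \ra \{0,1\}$ that corresponds to the weak Latin hypercube $\cH$ (so $X = X(\cH)$ and $w = \mathbf{1}_X$). First I would record the relevant parameters for this $\eta$: since $\eta(0) = \tfrac12$ and
$$
T = T(M) = \sum_{n=0}^{M-1} \eta\!\left(\tfrac{n}{M}\right) = \sum_{n=0}^{M-1}\left(\tfrac12 - \tfrac{n}{M} + \tfrac{n^2}{M^2}\right),
$$
a routine evaluation of the arithmetic and square-sum gives $T = \tfrac{M}{2} - \tfrac{M-1}{2} + \tfrac{(M-1)(2M-1)}{6M} = \tfrac{2M^2+1}{6M}$. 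I would double-check the boundary quantity $M\eta(0) - T = \tfrac{M}{2} - \tfrac{2M^2+1}{6M} = \tfrac{M^2-1}{6M}$, which will feed into Lemma~\ref{lem:excess_of_constant_weight}.

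Next, since $w$ is a $\{0,1\}$-weight with every row sum equal to $1$, I would pick $r = 1$, so that $w_r = w_1$ is the constant weight $w_1(\bx) = 1/M$ and every factor $\sum_{\bx \in R} w(\bx) - r$ on the right-hand side of~\eqref{eq:excess} vanishes. Proposition~\ref{thm:expression_for_excess} then immediately yields $\exT(w) = \exT(w_1)$, and Lemma~\ref{lem:excess_of_constant_weight} with $r=1$ gives
$$
\exT(w) = M^{d-2}\left(T^d + (M\eta(0)-T)^d\right) = M^{d-2}\left(\left(\tfrac{2M^2+1}{6M}\right)^{\!d} + \left(\tfrac{M^2-1}{6M}\right)^{\!d}\right) = \frac{(2M^2+1)^d + (M^2-1)^d}{6^d M^2}.
$$
So the combinatorial structure of the Latin hypercube has been entirely absorbed by the vanishing of the row-sum factors; the excess is completely explicit.

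Finally I would translate $\exT(w) = \ETp(w) - \ETe(w)$ back into discrepancies using the two identities from Example~\ref{ex:discrepancy_as_energy}. Writing $S \coloneqq \sum_{\bx \in X} w(\bx) = N = M^{d-1}$ (the cardinality of $X$), those identities read $\ETp(w) = \Lp(X)^2 + \tfrac{S^2}{3^d}$ and $\ETe(w) = 2^d\bigl(\Le(X)^2 - \tfrac{S^2}{12^d}\bigr)$. Subtracting,
$$
\exT(w) = \Lp(X)^2 - 2^d \Le(X)^2 + \frac{S^2}{3^d} + \frac{2^d S^2}{12^d} = \Lp(X)^2 - 2^d \Le(X)^2 + \frac{(1+2^d)M^{2d}}{6^d M^2},
$$
using $S^2 = M^{2d-2}$ and $\tfrac{1}{3^d} + \tfrac{2^d}{12^d} = \tfrac{2^d}{6^d} + \tfrac{1}{6^d} = \tfrac{1+2^d}{6^d}$, and rearranging gives exactly the claimed formula. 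The steps here are all routine; the only places demanding care are the arithmetic for $T$ and $M\eta(0)-T$ (a sign slip there would propagate everywhere) and correctly matching the normalizing constants and the factor $S^2 = M^{2d-2}$ when passing between $\cT$-energy and squared discrepancy. There is no genuine obstacle — the entire weight of the argument sits in Proposition~\ref{thm:expression_for_excess}, which we may invoke, and the Latin-hypercube hypothesis enters only through the trivial observation that all row sums are $1$.
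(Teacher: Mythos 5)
Your proposal is correct and follows exactly the paper's own argument: apply Proposition~\ref{thm:expression_for_excess} with $r=1$ to the indicator weight of the weak Latin hypercube (so the row-sum factors kill the right-hand side), evaluate the excess via Lemma~\ref{lem:excess_of_constant_weight} with $\eta(0)=\tfrac12$ and $T=\tfrac{2M^2+1}{6M}$, and translate back to discrepancies through Example~\ref{ex:discrepancy_as_energy}. All the arithmetic (including $M\eta(0)-T=\tfrac{M^2-1}{6M}$ and the constant $\tfrac{1+2^d}{6^d}$) checks out, so there is nothing to add.
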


\begin{proof}
	Let $X \sbse [0, 1)^d$ be constructed from a weak $M$-latin hypercube. Consider the weight $w: G \ra \{0, 1\}$ given by $w(\bx) = 1$ if and only if $\bx \in X$ (that is the characteristic function of $X$) and note that, as discussed above, $w$ has constant row sum equal to $1$. Applying Proposition \ref{thm:expression_for_excess} with $r=1$, noting that $\sum_{\bx \in R} w(x) - 1 = 0$ for all rows $R$ and using Lemma \ref{lem:excess_of_constant_weight} we get
	$$
	\exT(w) = \exT(w_1) = M^{d-2}\left(T^d + (M \eta(0) - T)^d\right).
	$$
	To recover the $L_2$-discrepancy from this we choose $\eta(s) = \frac{1}{2} - s + s^2$ and the corresponding energy triple $\cT$ as in Example \ref{ex:discrepancy_as_energy}. It holds that $\eta(0) = 1/2$ and
	\begin{align} \label{eq:Lp_energy_sum}
		T = \sum_{m=0}^{M-1} \frac{1}{2} - \frac{m}{M} + \frac{m^2}{M^2} = \frac{2M^2+1}{6M}.
	\end{align}
	Again by Example \ref{ex:discrepancy_as_energy}, relating $\ETe$ and $\ETp$ to $\Le$ and $\Lp$, we see that
	\begin{align*}
		& \Lp(X)^2 - 2^d \Le(X)^2 = \exT(w) - \left(\sum_{\bx \in G} w(\bx)\right)^2 \left(\frac{1}{3^d} + \frac{1}{6^d}\right) \\
		= & M^{d-2}\left[\left(\frac{2M^2+1}{6M}\right)^d + \left(\frac{M}{2} - \frac{2M^2+1}{6M}\right)^d\right] - M^{2(d-1)} \frac{2^d+1}{6^d},
	\end{align*}
	which simplifies to the desired result.
\end{proof}

\begin{rem}
	The proof even shows that $\ETp(w) - \ETe(w)$ does not depend on $w$ as long as $w$ has constant row sums all equal to some $r \in \bbR$. This is a quadratic form in the variables $w(\bx), \bx \in G$, for which it can be verified separately that all non-constant coefficients vanish. In this case Proposition \ref{thm:expression_for_excess} actually gives even more information (also including the case when $w$ does not have constant row sums). For the sake of generality, we chose to go this more involved route.
\end{rem}

For $d=2$ we recover Theorem \ref{thm:Lp_Le_permutation}. This can be generalized a bit with only slight adjustments to the proof (just replace the occurences of $w_1$ with $w_r$ in the above proof) to the following.

\begin{thm} \label{cor:Lp_Le_union_latin_hypercubes}
	If $X \sbse [0, 1)^d$ is constructed from the union of $r$ disjoint weak $M$-Latin hypercubes (in the obvious way) then
	$$
	\Lp(X)^2 - 2^d \Le(X)^2 = r^2 \frac{(2M^2+1)^d + (M^2-1)^d - (1+2^d)M^{2d}}{6^d M^2}.
	$$
\end{thm}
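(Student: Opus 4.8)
The plan is to imitate the proof of Theorem~\ref{cor:Lp_Le_latin_hypercubes} verbatim, but with the weight $w$ chosen to have constant row sums all equal to $r$ instead of $1$. Concretely, let $X \sbse [0,1)^d$ be the union of $r$ disjoint weak $M$-Latin hypercubes $\cH_1, \dots, \cH_r$ and let $w \colon G \ra \bbR$ be the characteristic function $w = \sum_{j=1}^r \mathbf{1}_{X(\cH_j)}$, i.e.\ $w(\bx)$ counts how many of the hypercubes contain $\bx$. Since the hypercubes are disjoint and each contributes exactly one point to every row, $w$ has constant row sums equal to $r$ and takes values in $\{0, 1, \dots, r\}$ (if we only want a point \emph{set}, disjointness of the $X(\cH_j)$ forces $w \in \{0,1\}$, but the argument works regardless). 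Moreover $\sum_{\bx \in G} w(\bx) = r M^{d-1}$.

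First I would apply Proposition~\ref{thm:expression_for_excess} with this $r$: for every row $R$ we have $\sum_{\bx \in R} w(\bx) - r = 0$, so the entire right-hand side of \eqref{eq:excess} vanishes and we conclude $\exT(w) = \exT(w_r)$. By Lemma~\ref{lem:excess_of_constant_weight} this equals $M^{d-2}(T^d + (M\eta(0)-T)^d)\,r^2$. Next I would specialize to the canonical energy triple from Example~\ref{ex:discrepancy_as_energy} with $\eta(s) = \tfrac12 - s + s^2$, so that $\eta(0) = \tfrac12$ and, by \eqref{eq:Lp_energy_sum}, $T = \tfrac{2M^2+1}{6M}$; note $M\eta(0) - T = \tfrac{M}{2} - \tfrac{2M^2+1}{6M} = \tfrac{M^2-1}{6M}$.

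Then, exactly as in the previous proof, Example~\ref{ex:discrepancy_as_energy} gives
\begin{align*}
	\Lp(X)^2 - 2^d \Le(X)^2
	&= \exT(w) - \Bigl(\sum_{\bx \in G} w(\bx)\Bigr)^2 \Bigl(\frac{1}{3^d} + \frac{1}{6^d}\Bigr) \\
	&= M^{d-2} r^2 \Bigl[\Bigl(\frac{2M^2+1}{6M}\Bigr)^d + \Bigl(\frac{M^2-1}{6M}\Bigr)^d\Bigr] - r^2 M^{2(d-1)} \frac{2^d + 1}{6^d}.
\end{align*}
Factoring out $r^2$ and simplifying the bracketed expression over the common denominator $(6M)^d = 6^d M^d$ turns the first term into $r^2 \tfrac{(2M^2+1)^d + (M^2-1)^d}{6^d M^2}$, and combining with the last term gives exactly $r^2 \tfrac{(2M^2+1)^d + (M^2-1)^d - (1+2^d)M^{2d}}{6^d M^2}$, as claimed.

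There is essentially no obstacle here: the only point requiring any care is checking that the union of $r$ disjoint weak Latin hypercubes indeed corresponds to a weight with constant row sum $r$ (immediate from the definition, since each hypercube meets each row in exactly one point), and the arithmetic simplification at the end, which is routine. The remark following Theorem~\ref{cor:Lp_Le_latin_hypercubes} already flags that this generalization follows by replacing $w_1$ with $w_r$, so the proof can be stated very briefly, pointing to the proof of Theorem~\ref{cor:Lp_Le_latin_hypercubes} and noting only the two changes: the weight now has row sums $r$, and $\sum_{\bx\in G} w(\bx) = r M^{d-1}$, which introduces the overall factor $r^2$.
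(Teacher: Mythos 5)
Your proposal is correct and follows exactly the route the paper intends: the paper itself states that the proof is obtained from that of Theorem~\ref{cor:Lp_Le_latin_hypercubes} by replacing $w_1$ with $w_r$, which is precisely what you do, and your arithmetic (row sums equal to $r$, $\sum_{\bx\in G} w(\bx) = rM^{d-1}$, and the resulting overall factor $r^2$) checks out.
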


A special case of this would be $X = G$, the entire discretized torus itself for $r = M$ (compare with Remark 12 in \cite{HKP20}).

Let $X \sbse [0, 1)^d$ be constructed from a weak $M$-Latin hypercube. Since $\Le(X)^2 \geq 0$ we see by Theorem \ref{cor:Lp_Le_latin_hypercubes} that
\begin{align} \label{ineq:Lp_first_lower_bound}
	\Lp(X)^2 \geq \frac{d(2^{d-1}-1)}{6^d} M^{2d-4} + O (M^{2d-6}),
\end{align}
in terms of $N \coloneqq \#X = M^{d-1}$ we get $\Lp(X) \gtrsim N^{\frac{d-2}{d-1}}$. It is natural to ask if a similar lower bound holds for $\Le(X)$. This will turn out to be the case. To show this, it actually suffices to improve on the coefficient of $M^{2d-4}$ in \eqref{ineq:Lp_first_lower_bound}.

For this, note that $\ETp(w)$ can be seen as a quadratic form in $w \in \bbR^G$, i.e. we can write
$$
\ETp(w) = w^\top E w
$$
with $E \in \bbR^{G \times G}$ and matrix entries $E_{\bx \by} = \prod_{k=1}^d \etp(x_k, y_k), \bx, \by \in G$. It turns out that in the canonical case (of the form as in Proposition \ref{prop:energy_triple_canonical_construction}),  $E$ is actually a very structured matrix which allows us to find its eigenvalue decomposition. For this define $\Gbar \coloneqq \{0, 1, \dots, N-1\}^d$. The following can be seen as an eigenvalue decomposition of the $d$-fold Kronecker product of real symmetric, circulant matrices and the proof is very similar to the case $d=1$ of circulant matrices (see for example \cite{Dav79, KSU03, KS12, Vyb11}). In essence it is the well known result that the multidimensional discrete Fourier transform of a convolution becomes a componentwise multiplication in the frequency domain.

\begin{thm} \label{thm:svd_energy}
	Let $\eta: [0, 1) \ra \bbR$ with $\eta(s) = \eta(1-s)$ for all $s \in (0, 1)$ and let $\cT$ be the canonical energy triple via $\eta$. For $w: G \ra \bbR$ its periodic $\cT$-energy fulfils
	\begin{align*}
		\sum_{\bx, \by \in G} w(x) w(y) \prod_{k=1}^d \eta(|x_k - y_k|) = M^{-d} \sum_{\boldf \in \Gbar} \lambda_\boldf \left|\sum_{\bx \in G} w(x) \exp(2\pi\iu \boldf \cdot \bx)\right|^2,
	\end{align*}
	where $\iu^2 = -1$, $\boldf \cdot \bx$ is the usual scalar product and
	$$
	\lambda_\boldf = \prod_{k=1}^d \left(\sum_{m=0}^{N-1} \eta\left(\frac{m}{M}\right) \exp\left(2\pi\iu f_k \frac{m}{M}\right)\right).
	$$
\end{thm}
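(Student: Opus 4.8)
The plan is to recognise the left-hand side as the quadratic form $w^\top E w$ of the matrix $E\in\bbR^{G\times G}$ with $E_{\bx\by}=\prod_{k=1}^{d}\eta(|x_k-y_k|)$ (which is exactly $\ETp(w)$ for the canonical triple via $\eta$, since there $\etp(s,t)=\eta(|s-t|)$), and to diagonalise $E$ by the $d$-dimensional discrete Fourier transform on $G$. The first step is to make the circulant structure visible. Writing $\bx=\tfrac1M\bm$, $\by=\tfrac1M\bn$ with $\bm,\bn\in\{0,\dots,M-1\}^{d}$ and $j:=m_k-n_k$, one has $|x_k-y_k|=j/M$ when $j\ge 0$ and $|x_k-y_k|=-j/M=1-(M+j)/M$ when $j<0$; in both cases $\eta(s)=\eta(1-s)$ gives $\eta(|x_k-y_k|)=\eta(\langle m_k-n_k\rangle/M)$, where $\langle\cdot\rangle$ denotes the residue in $\{0,\dots,M-1\}$. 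Extending $\eta$ $1$-periodically to $\bbR$ this reads $E_{\bx\by}=\prod_{k=1}^{d}\eta\!\big((m_k-n_k)/M\big)$, so $E$ is a $d$-level circulant matrix: its entries depend only on $\bx-\by$, with $G$ viewed as the group $(\bbZ/M\bbZ)^{d}$. This symmetry reduction is the only place the hypothesis on $\eta$ is used, and it also forces the factors of $\lambda_\boldf$ below to be real.

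Next I would check that the Fourier characters diagonalise $E$. For $\boldf\in\Gbar$ set $v_\boldf\in\bbC^{G}$, $v_\boldf(\bx)=\exp(2\pi\iu\boldf\cdot\bx)$. Substituting $\bn=\bm-\bj$ in $(Ev_\boldf)(\bx)=\sum_{\by}E_{\bx\by}v_\boldf(\by)$ and letting the product over the $d$ coordinates factor the resulting sum gives $Ev_\boldf=\lambda_\boldf v_\boldf$ with $\lambda_\boldf=\prod_{k=1}^{d}\big(\sum_{j}\eta(j/M)\exp(-2\pi\iu f_k j/M)\big)$; replacing $j$ by $M-j$ and using $\eta(j/M)=\eta((M-j)/M)$ together with $f_k\in\bbZ$ turns the sign in the exponent into a plus, showing each factor is real and equals the one in the statement. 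Hence $\{v_\boldf\}_\boldf$ is an eigenbasis of $E$.

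Finally I would apply Parseval. The $v_\boldf$ are pairwise orthogonal with $\langle v_\boldf,v_\boldf\rangle=\#G=M^{d}$, so expanding $w=\sum_\boldf c_\boldf v_\boldf$ with $c_\boldf=M^{-d}\sum_{\bx\in G}w(\bx)\overline{v_\boldf(\bx)}$ and using orthogonality gives
\begin{align*}
w^\top E w=\langle Ew,w\rangle &=\sum_{\boldf}\lambda_\boldf\,|c_\boldf|^{2}\,\langle v_\boldf,v_\boldf\rangle=M^{d}\sum_{\boldf}\lambda_\boldf\,|c_\boldf|^{2}.
\end{align*}
Because $w$ is real, $|c_\boldf|^{2}=M^{-2d}\big|\sum_{\bx}w(\bx)\exp(2\pi\iu\boldf\cdot\bx)\big|^{2}$ (conjugation only flips the sign of the exponent), and substituting this back produces exactly the right-hand side of the theorem, with overall factor $M^{-d}$.

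I do not expect a genuine obstacle: the entire content is the circulant reduction of the first step, after which this is the textbook diagonalisation of a circulant matrix by the DFT (equivalently, the DFT turns convolution into pointwise multiplication, applied coordinatewise). The only care needed is to carry the normalising constant $M^{d}=\#G$ from $\langle v_\boldf,v_\boldf\rangle$ through the computation, and to keep the complex-conjugation and sign conventions straight so that $\lambda_\boldf$ and the Fourier modulus come out with a plus sign in the exponent as stated.
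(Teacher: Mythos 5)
Your proposal is correct and follows exactly the route the paper intends: the paper omits the details, merely noting that the result is the eigenvalue decomposition of a $d$-fold Kronecker product of real symmetric circulant matrices (citing the circulant-matrix literature), and your argument — reducing to a multilevel circulant via $\eta(s)=\eta(1-s)$, diagonalising by the DFT characters, and applying Parseval with the normalisation $\#G=M^{d}$ — is precisely that standard proof written out. (You also implicitly fix the paper's typo $\sum_{m=0}^{N-1}$, which should read $\sum_{m=0}^{M-1}$ as in the cosine form given right after the theorem.)
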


Since $\eta(s) = \eta(1-s)$ we could even write
$$
\lambda_\boldf = \prod_{k=1}^d \left(\sum_{m=0}^{M-1} \eta\left(\frac{m}{M}\right) \cos\left(2\pi f_k \frac{m}{M}\right)\right).
$$

The following could be seen as a discrete version of Proposition 3 of \cite{HO16} (equation \eqref{eq:periodisc_disc_Fourier} above) and is also similar to Lemma 2.8 of \cite{KN12}. Also note the similarities to diaphony (\cite{Lev95, Zin76}, Definition 1.29 in \cite{DT06} and \eqref{eq:diaphony} above)

\begin{cor} \label{cor:svd_periodic discrepancy}
	Let $w: G \ra \bbR$, then
	$$
	\Lp(G, w)^2 = -\frac1{3^d} \left(\sum_{\bx \in G} w(\bx)\right)^2 + \sum_{\boldf \in \Gbar} \mu_\boldf \left|\sum_{\bx \in G} w(\bx) \exp(2\pi\iu \boldf \cdot \bx)\right|^2
	$$
	with
	$$
	\mu_\boldf = \prod_{k=1}^d \begin{cases}
		\frac13 + \frac{1}{6M^2} & , f_k = 0 \\
		\frac{1}{2M^2 \sin^2(\pi f_k/M)} & , f_k \neq 0
	\end{cases}
	$$
	for $\boldf \in \Gbar$.
\end{cor}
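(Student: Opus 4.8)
The plan is to specialize Theorem~\ref{thm:svd_energy} to the energy triple of Example~\ref{ex:discrepancy_as_energy}, i.e. to the canonical $\eta(s) = \tfrac12 - s + s^2$ (which indeed satisfies $\eta(s) = \eta(1-s)$), and then to evaluate the resulting one-dimensional trigonometric sums in closed form.

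First I would recall from Example~\ref{ex:discrepancy_as_energy} that
$$
\Lp(G, w)^2 = -\frac1{3^d}\Bigl(\sum_{\bx \in G} w(\bx)\Bigr)^2 + \ETp(w),
$$
with $\ETp(w) = \sum_{\bx, \by \in G} w(\bx) w(\by) \prod_{k=1}^d \etp(x_k, y_k)$ and $\etp(s, t) = \eta(|s-t|)$. Applying Theorem~\ref{thm:svd_energy} to this $\eta$ expresses $\ETp(w)$ as $\sum_{\boldf \in \Gbar} \mu_\boldf \bigl|\sum_{\bx \in G} w(\bx) \exp(2\pi\iu \boldf \cdot \bx)\bigr|^2$ where, by the product form of the eigenvalues and the symmetry of $\eta$,
$$
\mu_\boldf = \prod_{k=1}^d \nu(f_k), \qquad \nu(f) \coloneqq \frac1M \sum_{m=0}^{M-1} \eta\Bigl(\frac mM\Bigr) \cos\Bigl(\frac{2\pi f m}{M}\Bigr).
$$
Thus the whole statement reduces to showing $\nu(0) = \tfrac13 + \tfrac1{6M^2}$ and $\nu(f) = \tfrac1{2M^2 \sin^2(\pi f/M)}$ for $f \in \{1, \dots, M-1\}$.

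The case $f = 0$ is an immediate computation: inserting $\eta(m/M) = \tfrac12 - \tfrac mM + \tfrac{m^2}{M^2}$ and using $\sum_{m=0}^{M-1} m = \tfrac{M(M-1)}2$ and $\sum_{m=0}^{M-1} m^2 = \tfrac{M(M-1)(2M-1)}6$ gives $\nu(0) = \tfrac{2M^2+1}{6M^2}$ (this is $T/M$ with $T$ as in \eqref{eq:Lp_energy_sum}). For $f \neq 0$ I would proceed elementarily: with $\omega = \exp(2\pi\iu f/M) \neq 1$ we have $\sum_{m=0}^{M-1} \omega^m = 0$, which kills the constant term of $\eta$, and differentiating this geometric identity (equivalently, a short telescoping manipulation) yields $\sum_{m=0}^{M-1} m \omega^m = \tfrac M{\omega-1}$ and $\sum_{m=0}^{M-1} m^2 \omega^m = \tfrac{-2M}{(\omega-1)^2} + \tfrac{M(M-2)}{\omega-1}$; substituting and simplifying collapses everything to $\nu(f) = -\tfrac2{M^2}\,\mathrm{Re}\,\tfrac{\omega}{(\omega-1)^2}$, and since $(\omega-1)^2 = -4\omega \sin^2(\pi f/M)$ this equals $\tfrac1{2M^2 \sin^2(\pi f/M)}$. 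A slicker alternative, closer in spirit to \eqref{eq:periodisc_disc_Fourier}, is the aliasing (discrete Poisson summation) formula $\nu(f) = \sum_{j \in \bbZ} c_{f+jM}$, where $c_0 = \tfrac13$ and $c_h = \tfrac1{2\pi^2 h^2}$ for $h \neq 0$ are the Fourier coefficients of $\eta - \tfrac13 = B_2(\{\cdot\})$, combined with $\sum_{j \in \bbZ}(z+j)^{-2} = \pi^2/\sin^2(\pi z)$ at $z = f/M$.

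The only part that is not bookkeeping is this second evaluation — recognizing the closed form $1/(2M^2 \sin^2(\pi f/M))$ for the weighted exponential sum when $f \neq 0$. Neither route is difficult, but the elementary one requires care in differentiating the geometric series at a root of unity, while the aliasing route is clean at the cost of quoting the Fourier expansion of the second Bernoulli polynomial and the standard cotangent identity.
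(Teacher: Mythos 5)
Your proposal is correct and follows essentially the same route as the paper: specialize Theorem~\ref{thm:svd_energy} to $\eta(s)=\tfrac12-s+s^2$, handle $f=0$ via \eqref{eq:Lp_energy_sum}, and for $f\neq 0$ evaluate the exponential sums of $1$, $m$, $m^2$ at a root of unity to collapse everything to $-\tfrac{2}{M^2}\,\omega/(\omega-1)^2 = 1/(2M^2\sin^2(\pi f/M))$, exactly as in the paper's computation. The aliasing/Bernoulli-polynomial alternative you mention is a valid extra but not what the paper does.
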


Note the similarities to \eqref{eq:diaphony} and \eqref{eq:periodisc_disc_Fourier}.

\begin{proof}
	By Theorem \ref{thm:svd_energy} it remains to calculate $\lambda_\boldf$ for the case $\eta(s) = \frac{1}{2}-s+s^2$. For this we need the sum
	$$
	\sum_{m=0}^{M-1} \left(\frac{1}{2} - \frac{m}{M} + \frac{m^2}{M^2}\right) \exp\left(2\pi\iu f_k \frac{m}{M}\right).
	$$
	The case $f_k = 0$ reduces to \eqref{eq:Lp_energy_sum} so we only need $f_k \neq 0$. We use the elementary sums
	\begin{align*}
		\sum_{m=0}^{M-1} & e^{\iu mt} = \frac{1-e^{\iu Mt}}{1-e^{\iu t}}, \\
		\sum_{m=0}^{M-1} & m e^{\iu mt} = \frac{e^{\iu t} - M e^{\iu Mt} + (M-1)e^{\iu (M+1)t}}{\left(1-e^{\iu t}\right)^2}, \\
		\sum_{m=0}^{M-1} & m^2 e^{\iu mt} \\
		& = \frac{e^{\iu t} + e^{2\iu t} - M^2 e^{\iu Mt} + (2M^2-2M-1)e^{\iu (M+1)t} - (M-1)^2e^{\iu (M+2)t}}{\left(1-e^{\iu t}\right)^3}
	\end{align*}
	for $t \in \bbR$. Setting $t = 2\pi \frac{f_k}{M}$, using the $2\pi$-periodicity of $t \mapsto e^{\iu t}$ and simplifying accordingly gives
	\begin{align*}
		& \sum_{m=0}^{M-1} \left(\frac{1}{2} - \frac{m}{M} + \frac{m^2}{M^2}\right) \exp\left(2\pi\iu f_k \frac{m}{M}\right) \\
		= & -\frac{2}{M} \cdot \frac{\exp\left(2\pi\iu \frac{f_k}{M}\right)}{\left(1-\exp\left(2\pi\iu \frac{f_k}{M}\right)\right)^2} = \frac{1}{M \left(1 - \cos\left(2\pi \frac{f_k}{M}\right)\right)},
	\end{align*}
	where we can additionally write $1-\cos(2t) = 2 \sin^2(t)$. Then $\mu_\boldf = M^{-d} \lambda_\boldf$ and the statement follows.
\end{proof}

\begin{rem} \label{rem:svd_periodic_discrepancy} \
	\begin{itemize}
		\item [(i)] The factors in the definition of $\lambda_\boldf$ are a discrete Fourier transform of the function $\eta(s) = \frac{1}{2}-s+s^2$ on $[0, 1)$. Applying the inverse transform gives the identity
		\begin{align*} 
			\frac{1}{2} - \frac{m}{M} + \frac{m^2}{M^2} = \frac13 + \frac{1}{6M^2} + \frac{1}{2M^2} \sum_{n=1}^{M-1} \frac{\cos\left(2\pi m \frac{n}{M}\right)}{\sin^2\left(\pi \frac{n}{M}\right)}
		\end{align*}
		for $m = 0, 1, \dots, M-1$.
		
		\item [(ii)] For a permutation set $X(\sigma)$ we can write the formula of Theorem \ref{thm:svd_energy}, writing $\boldf = (f_1, f_2)$ and noticing that $\lambda_{(0, 0)} = T^2$ (see Definition \ref{def:energy_triple} (i)), as
		\begin{align*} 
			\ETp(\sigma) = \frac{T^2}{M^2} + \frac{1}{M^2} \sum_{f_1, f_2 = 1}^{M-1} \lambda_\boldf \left|\sum_{m = 0}^{M-1} \exp\left(\frac{2\pi\iu}{M}[f_1 m + f_2 \sigma(m)]\right)\right|^2,
		\end{align*}
		where terms corresponding to $\boldf = (f_1, 0)$ and $(0, f_2)$ vanish (except for $\boldf = (0, 0)$), and in the case of Corollary \ref{cor:svd_periodic discrepancy} as
		\begin{align*} 
			\begin{split}
				\Lp(\sigma)^2 = \frac{1}{9} + \frac{1}{36N^2} + \frac{1}{4N^4} \sum_{f_1, f_2 = 1}^{N-1} & \frac{1}{\sin^2(\pi f_1/N) \sin^2(\pi f_2/N)} \\
				\times & \left|\sum_{m=0}^{N-1}\exp\left(\frac{2\pi\iu}{N}[f_1 m + f_2 \sigma(m)]\right)\right|^2.
			\end{split}
		\end{align*}
		This is a generalization of the formula for the periodic $L_2$-discrepancy of rational lattices in Theorem 10 of \cite{HKP20}.
	\end{itemize}
\end{rem}


We come back to the lower bound for the periodic and extreme $L_2$-discrepancy of weak Latin hypercubes as discussed under Theorem \ref{cor:Lp_Le_union_latin_hypercubes}. In particular, we improve a bit on the bound in \eqref{ineq:Lp_first_lower_bound}.

\begin{thm} \label{cor:lower_bound_latin_hypercube_discrepancy}
	Let $X \sbse [0, 1)^d, d \geq 2$ be constructed from a weak $M$-Latin hypercube and $N = \#X = M^{d-1}$. Then
	$$
	\Lp(X) \geq \left(\frac{d}{2 \cdot 3^d}\right)^{1/2} N^{\frac{d-2}{d-1}}
	$$
	and
	$$
	\Le(X) \geq \left(\frac{d}{12^d}\right)^{1/2} (1-o(1)) \cdot N^{\frac{d-2}{d-1}},
	$$
	where $o(1) \longrightarrow 0$ as $N \longrightarrow \infty$.
\end{thm}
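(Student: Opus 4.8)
The plan is to obtain both inequalities from the spectral formula of Corollary~\ref{cor:svd_periodic discrepancy} combined with the exact identity of Theorem~\ref{cor:Lp_Le_latin_hypercubes}. Let $w\colon G\to\{0,1\}$ be the characteristic function of $X$, so that $\Lp(X)^2=\Lp(G,w)^2$ and $\sum_{\bx\in G}w(\bx)=N=M^{d-1}$. The crucial elementary observation is that every Fourier weight $\mu_\boldf$ appearing in Corollary~\ref{cor:svd_periodic discrepancy} is strictly positive, each of its factors being either $\tfrac13+\tfrac1{6M^2}$ or $\tfrac1{2M^2\sin^2(\pi f_k/M)}$.

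For the periodic bound I would simply discard all modes $\boldf\neq\mathbf 0$. By positivity of the $\mu_\boldf$,
\begin{align*}
	\Lp(X)^2\ \ge\ -\frac{N^2}{3^d}+\mu_{\mathbf 0}\Bigl|\sum_{\bx\in G}w(\bx)\Bigr|^2
	\ =\ N^2\Bigl[\Bigl(\tfrac13+\tfrac1{6M^2}\Bigr)^d-\tfrac1{3^d}\Bigr]
	\ =\ \frac{N^2}{3^d}\Bigl[\bigl(1+\tfrac1{2M^2}\bigr)^d-1\Bigr],
\end{align*}
and Bernoulli's inequality $(1+t)^d\ge 1+dt$ gives $\Lp(X)^2\ge\frac{d}{2\cdot 3^d}\cdot\frac{N^2}{M^2}$. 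Since $\frac{N^2}{M^2}=M^{2d-4}=N^{2(d-2)/(d-1)}$, taking square roots yields the first claim.

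For the extreme bound I would feed exactly this lower bound into $\Lp(X)^2-2^d\Le(X)^2=C_M$ from Theorem~\ref{cor:Lp_Le_latin_hypercubes}, where $C_M=\frac{(2M^2+1)^d+(M^2-1)^d-(1+2^d)M^{2d}}{6^dM^2}$. Writing $\epsilon:=1/M^2$ and factoring out $M^{2d}$ one has $C_M=\frac{M^{2d-2}}{6^d}\bigl[(2+\epsilon)^d+(1-\epsilon)^d-1-2^d\bigr]$, hence, using $N^2=M^{2d-2}$,
\begin{align*}
	\Le(X)^2=2^{-d}\bigl(\Lp(X)^2-C_M\bigr)\ \ge\ \frac{M^{2d-2}}{12^d}\Bigl[2^d\bigl((1+\tfrac\epsilon2)^d-1\bigr)-(2+\epsilon)^d-(1-\epsilon)^d+1+2^d\Bigr].
\end{align*}
Since $2^d(1+\tfrac\epsilon2)^d=(2+\epsilon)^d$, the bracket collapses to precisely $1-(1-\epsilon)^d$, so $\Le(X)^2\ge\frac{M^{2d-2}}{12^d}\bigl(1-(1-1/M^2)^d\bigr)$. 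Finally $1-(1-1/M^2)^d\ge\frac{d}{M^2}(1-1/M^2)^{d-1}=\frac{d}{M^2}(1-o(1))$ gives $\Le(X)^2\ge\frac{d}{12^d}M^{2d-4}(1-o(1))=\frac{d}{12^d}N^{2(d-2)/(d-1)}(1-o(1))$, and a square root completes the proof.

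The computation is short once the right ingredient is isolated; in that sense the only thing to get right — and the main obstacle to a naive attempt — is to keep the $\boldf=\mathbf 0$ contribution to $\Lp(X)^2$ in the unexpanded form $(1+\tfrac1{2M^2})^d$ rather than in its Bernoulli surrogate $\tfrac{d}{2M^2}$, so that the exact cancellation $2^d(1+\tfrac\epsilon2)^d=(2+\epsilon)^d$ against $C_M$ takes place and leaves the clean positive remainder $1-(1-1/M^2)^d$; replacing the term prematurely still works but forces one to track a signed $O(M^{2d-6})$ error throughout.
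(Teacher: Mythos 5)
Your proof is correct and follows essentially the same route as the paper: positivity of the Fourier weights $\mu_\boldf$ in Corollary~\ref{cor:svd_periodic discrepancy} lets you keep only the $\boldf=\mathbf 0$ mode for the periodic bound, and the extreme bound then follows by feeding this into the identity of Theorem~\ref{cor:Lp_Le_latin_hypercubes}. The only (cosmetic) difference is that you carry the exact expression $(1+\tfrac1{2M^2})^d$ through the second step and exploit the cancellation $2^d(1+\tfrac{\epsilon}{2})^d=(2+\epsilon)^d$ to get the clean remainder $1-(1-1/M^2)^d$, whereas the paper expands to leading order and tracks an $O(M^{2d-6})$ error; both yield the stated constants.
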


\begin{proof}
	Notice that all the $\lambda_\boldf$ in Corollary \ref{cor:svd_periodic discrepancy} are positive. Thus, using only the summand corresponding to $\boldf = \mathbf{0}$ we get
	$$
	\sum_{\boldf \in \Gbar} \mu_\boldf \left|\sum_{\bx \in G} w(x) \exp(2\pi\iu \boldf \cdot \bx)\right|^2 \geq \left(\frac13 + \frac{1}{6M^2}\right)^d \left(\sum_{\bx \in G} w(\bx)\right)^2.
	$$
	Using $\sum_{\bx \in G} w(\bx) = \#X = M^{d-1}$ we get
	$$
	\Lp(X)^2 \geq -\frac1{3^d} M^{2(d-1)} + \left(\frac{1}{3} + \frac{1}{6M^2}\right)^d M^{2(d-1)} \geq \frac{d}{2 \cdot 3^d} M^{2d-4} = \frac{d}{2 \cdot 3^d} N^{2 \frac{d-2}{d-1}},
	$$
	proving the bound on $\Lp(X)$. As for $\Le(X)$, we employ Corollary \ref{cor:Lp_Le_latin_hypercubes} and the just proven bound for $\Lp(X)$ to get
	\begin{align*}
		2^d \Le(X)^2 & = \Lp(X)^2 - \frac{(2M^2+1)^d + (M^2-1)^d - (1+2^d)M^{2d}}{6^d M^2} \\
		& \geq \frac{d}{2 \cdot 3^d} M^{2d-4} - \frac{d}{2}\left(\frac{1}{3^d} - \frac{2}{6^d}\right) M^{2d-4} - O(M^{2d-6}) \\
		& = \frac{d}{6^d} (1-o(1)) M^{2d-4} = \frac{d}{6^d} (1-o(1)) N^{2 \frac{d-2}{d-1}},
	\end{align*}
	from which the bound on $\Le(X)$ follows.
\end{proof}

The question remains how good these bounds are. One way to analyse this is through random point sets. This will be discussed in the next section.

\section{Random weak Latin hypercubes}

Consider randomly constructed weak Latin hypercubes. By that we mean that, for given parameters $M$ and $d$, among all $d$-dimensional weak $M$-Latin hypercubes we choose one $\cH$ uniformly at random and consider its periodic $L_2$-discrepancy $\Lp(\cH)^2$. We will determine the expected squared periodic $L_2$-discrepancy as $\cH$ runs uniformly through all weak Latin hypercubes. To do so we need some preparation.

Note that we can interprete a weak Latin hypercube $\cH$ as a function $\cH: \{0, 1, \dots, M-1\}^{d-1} \ra \{0, 1, \dots, M-1\}$ where $\cH(m_1, \dots, m_{d-1}) \coloneqq m_d$ is determined by the unique point $(m_1, \dots, m_d)$ in the latin hypercube with first $d-1$ coordinates determined by $m_1, \dots, m_{d-1}$. Denote by $\Lambda = \Lambda_M^d$ the number of $d$-dimensional weak $M$-latin hypercubes (see \cite{MW08} for more on these quantities). For any given $\bm, \bn \in \{0, 1, \dots, M-1\}^{d-1}$ we will need the probability distribution
$$
\prob_{\bm, \bn}(p, q) \coloneqq \Lambda^{-1} \cdot \#\{\cH: (\cH(\bm), \cH(\bn)) = (p, q)\}.
$$

\begin{lem} \label{lem:probability_distribution}
	Given $\bm, \bn \in \{0, 1, \dots, M-1\}^{d-1}$ set $\delta \coloneqq \#\{i=1, \dots, d-1: m_i \neq n_i\}$ (the \emph{Hamming distance}). Then
	$$
	\prob_{\bm, \bn}(p, q) = \prob_\delta(p, q) = \begin{cases}
		\frac{(M-1)^{\delta-1} - (-1)^{\delta-1}}{M^2 (M-1)^{\delta-1}} & , p = q \\
		\frac{(M-1)^\delta - (-1)^\delta}{M^2 (M-1)^\delta} & , p \neq q
	\end{cases}.
	$$
\end{lem}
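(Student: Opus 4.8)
The plan is to compute $\#\{\cH : (\cH(\bm),\cH(\bn)) = (p,q)\}$ directly by a counting/symmetry argument, and show it depends only on the Hamming distance $\delta$. The key structural observation is that a weak $M$-Latin hypercube, viewed as a function $\cH:\{0,\dots,M-1\}^{d-1}\to\{0,\dots,M-1\}$, is \emph{constrained only along lines}: fixing all but one of the $d-1$ input coordinates, the restriction of $\cH$ to that line must be a permutation of $\{0,\dots,M-1\}$, and conversely any assignment satisfying all these line-permutation constraints is a valid weak Latin hypercube. So the conditions on $\cH(\bm)$ and $\cH(\bn)$ interact only through the sequence of lines connecting $\bm$ to $\bn$.

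**Reduction to a path and to the $\delta=1$ case.**
First I would reduce to $\delta = d-1$ by observing that coordinates in which $\bm$ and $\bn$ agree are irrelevant: restrict attention to the $(d-1)$-dimensional sub-hypercube through $\bm$ and $\bn$ in the differing coordinates — or, more carefully, set up a bijection that fixes the agreeing coordinates and shows $\prob_{\bm,\bn}$ only sees the $\delta$ differing ones, so $\prob_{\bm,\bn} = \prob_\delta$. Then I would build a path $\bm = \bp^0, \bp^1, \dots, \bp^\delta = \bn$ where consecutive $\bp^{i-1}, \bp^i$ differ in exactly one coordinate (lie on a common line). The idea is to compute the joint distribution of $(\cH(\bp^0), \cH(\bp^1), \dots, \cH(\bp^\delta))$ as a Markov chain: the increment from $\cH(\bp^{i-1})$ to $\cH(\bp^i)$ along a single line, conditioned on everything before, is governed by the $\delta=1$ transition, which is
$$
\prob_1(a,b) = \begin{cases} 0 & a = b \\ \tfrac{1}{M(M-1)} & a \neq b,\end{cases}
$$
coming from a uniformly random permutation on a line sending $p^{i-1}_{(i)}$-slot to value $a$ and $p^i_{(i)}$-slot to value $b$ (two distinct rows of a Latin square, essentially). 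The subtlety is justifying the Markov property — that conditioning on $\cH$ along the whole prefix path is equivalent to conditioning only on the last value — which follows because distinct lines share at most one point, so the constraints decouple once you condition on the shared coordinate's value.

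**Solving the recursion.**
Given the chain, $\prob_\delta(p,q)$ is obtained by composing the single-line transition $\delta$ times. Let $a_\delta = \prob_\delta(p,p)$ (same endpoint value) and $b_\delta = \prob_\delta(p,q)$ for $p\neq q$; by symmetry $a_\delta + (M-1) b_\delta = 1/M$ (the marginal of $\cH(\bn)$ is uniform). The transition gives $a_\delta = (M-1) b_{\delta-1}\cdot \tfrac{1}{M(M-1)}\cdot(\text{count})$ — more cleanly, $a_\delta = \sum_{r} \prob_{\delta-1}(p,r)\prob_1(r,p) = (M-1)b_{\delta-1}\cdot\frac{1}{M(M-1)}$, i.e. $a_\delta = \tfrac{1}{M}b_{\delta-1}(M-1)\cdot\frac{1}{M-1}$; carrying this out yields a linear recursion whose solution is the claimed closed form $a_\delta = \frac{(M-1)^{\delta-1}-(-1)^{\delta-1}}{M^2(M-1)^{\delta-1}}$, $b_\delta = \frac{(M-1)^\delta-(-1)^\delta}{M^2(M-1)^\delta}$. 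One checks the base case $\delta=1$ directly and the general case by induction (equivalently, the eigenvalue $-\tfrac{1}{M-1}$ of the single-line transition matrix on the "distinct-values" part produces the alternating sign). I expect the main obstacle to be the rigorous justification of the Markov/decoupling step in the second paragraph — i.e. that the number of weak Latin hypercubes with prescribed values along a whole path factors as a product over the individual line-transitions — rather than the final algebra, which is routine once the recursion is in hand. An alternative that sidesteps the chain is an inclusion–exclusion over which coordinates the "defect" propagates through, but the Markov argument is cleaner.
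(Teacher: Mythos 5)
Your plan has the right skeleton (the reduction to the $\delta$ differing coordinates via coordinate-value permutations is fine, and the final algebra is correct: the transition matrix $\frac{1}{M-1}(J-I)$ has eigenvalue $-\frac{1}{M-1}$ off the constants, which produces exactly the claimed closed form), but the step you yourself flag as the main obstacle --- the Markov/factorization property along the path --- is a genuine gap, and the justification you offer for it does not work. ``Distinct lines share at most one point, so the constraints decouple'' is precisely the kind of locality statement that fails for Latin-type structures: the lemma itself exhibits long-range correlations, since $\prob_\delta(p,p)\neq 1/M^2$ even though $\bm$ and $\bn$ lie on no common line for $\delta\geq 2$. Concretely, for the path $(0,0)\to(1,0)\to(1,1)$ in a Latin square, the value at $(1,1)$ is also constrained by the line through $(0,1)$, whose values are tied back to $\cH(0,0)$; nothing about lines meeting in at most one point rules out residual dependence of $\cH(1,1)$ on $\cH(0,0)$ after conditioning on $\cH(1,0)$. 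What you would actually need is that the number of weak Latin hypercubes taking prescribed values $a_0,\dots,a_\delta$ at the successive path points equals $\Lambda\,M^{-1}(M-1)^{-\delta}\prod_{i=1}^{\delta}[a_i\neq a_{i-1}]$; this is an unproven and (for $\delta\geq 3$, i.e.\ conditioning on three or more points) quite strong structural claim about the joint law, not a consequence of the two-point marginals you are trying to compute. Symbol-permutation symmetry only shows that the conditional law of the value at the $i$-th path point is constant on $\{0,\dots,M-1\}\setminus\{a_0,\dots,a_{i-1}\}$, not on all of $\{0,\dots,M-1\}\setminus\{a_{i-1}\}$, which is what the chain requires.

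The paper sidesteps conditional distributions entirely with a deterministic counting identity: for \emph{every individual} hypercube $\cH$ with $\cH(\mathbf{0})=0$, inclusion--exclusion over the slices $\{0,\dots,M-1\}^{\beta}\times\{0\}^{d-1-\beta}$ (each of which, for $\beta\geq 1$, contains every value exactly $M^{\beta-1}$ times) shows that the number of points $\bk\in\{1,\dots,M-1\}^{\delta}\times\{0\}^{d-1-\delta}$ with $\cH(\bk)=0$ (resp.\ $=1$) is a fixed number independent of $\cH$; dividing by $(M-1)^{\delta}$ and using the transitive symmetry on those points gives the probabilities with no conditional-independence input. To salvage your route you would have to prove the path-factorization, which appears at least as hard as the lemma itself; otherwise you should replace the chain by such a per-hypercube count.
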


\begin{proof}
	By symmetry, without loss of generality we may assume $m_1 = \dots = m_{d-1} = 0$ and $n_1 = \dots = n_\delta = 1, n_{\delta+1} = \dots = n_{d-1} = 0$. To further ease notation, it suffices to consider $\prob_{\bm, \bn}(0, 0)$ and $\prob_{\bm, \bn}(0, 1)$. It is easy to see that
	$$
	\bbP(\cH(\bm) = 0) = \frac{1}{M}.
	$$
	As for $\cH(\bn)$, by inclusion-exclusion (counting how often $\cH(\bk) = 0$ or $1$ appears in slices of the type $\bk \in \{0, 1, \dots, M-1\}^\beta \times \{0\}^{d-1-\beta}, \beta = 0, 1, \dots, \delta$ and coordinate permutations thereof while staying in $\{0, 1, \dots, M-1\}^\delta \times \{0\}^{d-1-\delta}$) we see that for all weak Latin hypercubes $\cH$ with $\cH(\mathbf{0}) = 0$ it holds
	\begin{align*}
		& \#\{\bk \in \{1, \dots, M-1\}^\delta \times \{0\}^{d-1-\delta}: \cH(\bk) = 0\} \\
		= & M^{\delta-1} - {\delta \choose 1} M^{\delta-2} + \dots + (-1)^{\delta - 1} {\delta \choose \delta-1} M^0 + (-1)^\delta \\
		= & \frac{1}{M}\left[(M-1)^\delta - (-1)^\delta\right] + (-1)^\delta
	\end{align*}
	and
	\begin{align*}
		& \#\{\bk \in \{1, \dots, M-1\}^\delta \times \{0\}^{d-1-\delta}: \cH(\bk) = 1\} \\
		= & M^{\delta-1} - {\delta \choose 1} M^{\delta-2} + \dots + (-1)^{\delta - 1} {\delta \choose \delta-1} M^0 \\
		= & \frac{1}{M}\left[(M-1)^\delta - (-1)^\delta\right].
	\end{align*}
	Again by symmetry we conclude, since $\# \left(\{1, \dots, M-1\}^\delta \times \{0\}^{d-1-\delta}\right) = (M-1)^\delta$,
	$$
	\prob_{\bm, \bn}(0, 0) = \frac{1}{M} \cdot \frac{\frac{1}{M}\left[(M-1)^\delta - (-1)^\delta\right] + (-1)^\delta}{(M-1)^\delta}
	$$
	and
	$$
	\prob_{\bm, \bn}(0, 1) = \frac{1}{M} \cdot \frac{\frac{1}{M}\left[(M-1)^\delta - (-1)^\delta\right]}{(M-1)^\delta},
	$$
	giving the claim.
\end{proof}

\begin{thm} \label{thm:expectation_energy}
	Given $d, M \in \bbN, d \geq 2, M \geq 2$ and an energy triple $\cT$, set
	$$
	\Delta \coloneqq \sum_{m=0}^{M-1} \etp\left(\frac{m}{M}, \frac{m}{M}\right).
	$$
	Then, as $\cH$ runs uniformly through all $d$-dimensional weak $M$-Latin hypercubes we have
	$$
	\bbE \ETp(\cH) = M^{d-2} \left(\frac{(\Delta-T)^d}{(M-1)^{d-1}} + T^d\right)
	$$
\end{thm}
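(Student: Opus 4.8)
The plan is to compute $\bbE \ETp(\cH)$ directly from its Warnock-type representation $\ETp(\cH) = \sum_{\bx, \by \in X(\cH)} \etp(\bx, \by)$, where $X(\cH) = \frac1M \cH$. First I would reparametrize the point set: viewing $\cH$ as a function $\cH : \{0, \dots, M-1\}^{d-1} \to \{0, \dots, M-1\}$, each point of $X(\cH)$ has the form $\frac1M(\bm, \cH(\bm))$ with $\bm \in \{0, \dots, M-1\}^{d-1}$, so that
$$
\ETp(\cH) = \sum_{\bm, \bn \in \{0, \dots, M-1\}^{d-1}} \left(\prod_{i=1}^{d-1} \etp\!\left(\frac{m_i}{M}, \frac{n_i}{M}\right)\right) \etp\!\left(\frac{\cH(\bm)}{M}, \frac{\cH(\bn)}{M}\right).
$$
Taking expectations and using linearity, $\bbE \ETp(\cH)$ becomes a sum over pairs $(\bm, \bn)$ of the fixed product $\prod_{i=1}^{d-1} \etp(m_i/M, n_i/M)$ times $\bbE\, \etp(\cH(\bm)/M, \cH(\bn)/M)$. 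The inner expectation is exactly $\sum_{p, q} \prob_{\bm,\bn}(p,q)\, \etp(p/M, q/M)$, which by Lemma \ref{lem:probability_distribution} depends only on the Hamming distance $\delta = \delta(\bm, \bn)$.

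Next I would evaluate that inner expectation. Splitting the sum over $(p,q)$ into the diagonal $p = q$ and the off-diagonal $p \neq q$, and writing $\prob_\delta(p,q) = a_\delta \mathbf{1}[p=q] + b_\delta$ where $b_\delta = \frac{(M-1)^\delta - (-1)^\delta}{M^2(M-1)^\delta}$ is the common off-diagonal value and $a_\delta = \prob_\delta(p,p) - b_\delta$ is the extra diagonal weight, a short computation gives $a_\delta = \frac{(-1)^{\delta-1}}{M(M-1)^{\delta-1}} \cdot \frac1M \cdot (\text{something})$; more precisely $\prob_\delta(p,p) - \prob_\delta(p,q) = \frac{(-1)^{\delta-1}}{M(M-1)^{\delta-1}} - \ldots$, so I would just carefully subtract the two cases of the lemma. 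Then
$$
\bbE\, \etp\!\left(\frac{\cH(\bm)}{M}, \frac{\cH(\bn)}{M}\right) = a_\delta \sum_{p=0}^{M-1} \etp\!\left(\frac{p}{M}, \frac{p}{M}\right) + b_\delta \sum_{p, q=0}^{M-1} \etp\!\left(\frac{p}{M}, \frac{q}{M}\right) = a_\delta \Delta + b_\delta \cdot M T,
$$
using the defining sum property of an energy triple ($\sum_q \etp(p/M, q/M) = T$ for each $p$, hence $\sum_{p,q} = MT$). Plugging in the explicit $a_\delta, b_\delta$ should collapse this to something of the form $\frac{c(-1)^\delta(\Delta - T)}{(M-1)^\delta} + \frac{\Delta + (M-1)T}{M^2}\cdot\frac{\text{stuff}}{\ldots}$; the key point I expect is that it rearranges into $\frac{1}{M}\left[T + \frac{(-1)^{\delta-1}(\Delta-T)}{(M-1)^{\delta-1}}\cdot\frac1M\right]$ or similar — so the inner expectation is (up to the right powers of $M$) a clean affine function of $\left(\frac{-1}{M-1}\right)^{\delta}$.

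Finally I would sum over $(\bm, \bn)$. The product $\prod_{i=1}^{d-1}\etp(m_i/M, n_i/M)$ factorizes coordinatewise, and for each coordinate $i$ either $m_i = n_i$ (contributing to a "diagonal" factor, and $\delta$ does not increment) or $m_i \neq n_i$ (off-diagonal, $\delta$ increments). Because the inner expectation is affine in $\left(\frac{-1}{M-1}\right)^\delta$ and $\left(\frac{-1}{M-1}\right)^\delta = \prod_{i : m_i \neq n_i}\left(\frac{-1}{M-1}\right)$ also factorizes over coordinates, the whole sum over $(\bm, \bn)$ becomes a product of $d-1$ identical per-coordinate sums. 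I would compute, for a single coordinate, $\sum_{p=q} \etp(p/M,p/M) = \Delta$ and $\sum_{p \neq q}\etp(p/M, q/M) = MT - \Delta$, and assemble the two resulting "tensor-power" terms. The $T^d$ term comes from the $b_\delta$-part (the fully factorized $MT$ in every coordinate, plus the final coordinate), and the $\frac{(\Delta - T)^d}{(M-1)^{d-1}}$ term comes from the $a_\delta$-part, where each off-diagonal coordinate contributes a factor $\frac{-(MT - \Delta)}{M(M-1)} = \frac{\Delta - MT}{M(M-1)}$ and each diagonal coordinate a factor $\frac{\Delta}{M}$ — the bookkeeping here is the main obstacle: one must track the powers of $M$ and the signs so that the binomial-type sum over which coordinates are off-diagonal telescopes to exactly $(\Delta - T)^d/(M-1)^{d-1}$ after multiplying by the $\delta = 0,1$ shift in Lemma \ref{lem:probability_distribution} and by $M^{d-2}$. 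I would verify the final algebra by checking the special cases $d = 2$ (where it should reduce to a known formula for random permutation sets) and $\etp$ constant.

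Overall the conceptual content is just linearity of expectation plus the coordinatewise factorization; the only real work is the careful constant-chasing in the last paragraph, and I would organize it by introducing $u \coloneqq \frac{-1}{M-1}$ and writing the per-coordinate sum as $\Delta + (MT - \Delta)$ in the $b$-part and $\Delta + u(MT-\Delta)$ (up to normalization) in the $a$-part, so the two global terms are literally $d$-th powers.
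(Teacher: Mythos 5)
Your proposal follows essentially the same route as the paper's proof: linearity of expectation applied to the Warnock-type double sum over $(\bm,\bn)$, Lemma \ref{lem:probability_distribution} to reduce the inner expectation to $\left(\tfrac{-1}{M-1}\right)^{\delta}\tfrac{\Delta-T}{M}+\tfrac{T}{M}$, and then coordinatewise factorization of the remaining sum into a product of $d-1$ identical factors yielding the two $d$-th-power terms. The only caveat is the stray factor of $\tfrac1M$ in your per-coordinate off-diagonal and diagonal contributions (the correct per-coordinate factor is $\Delta-\tfrac{MT-\Delta}{M-1}=\tfrac{M(\Delta-T)}{M-1}$), which you yourself flag as bookkeeping to be checked and which does not affect the validity of the argument's structure.
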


\begin{proof}
	We want to calculate
	$$
	\Lambda^{-1} \sum_{\cH} \sum_{m_1, n_1=0}^{M-1} \dots \sum_{m_{d-1}, n_{d-1}=0}^{M-1} \prod_{k=1}^{d-1} \etp\left(\frac{m_k}{M}, \frac{n_k}{M}\right) \cdot \etp\left(\frac{\cH(\bm)}{M}, \frac{\cH(\bn)}{M}\right).
	$$
	Pulling the expectation inside the sum and using Lemma \ref{lem:probability_distribution} we get
	\begin{align*}
		\bbE \ETp(\cH) = & \sum_{m_1, n_1=0}^{M-1} \dots \sum_{m_{d-1}, n_{d-1}=0}^{M-1} \prod_{k=1}^{d-1} \etp\left(\frac{m_k}{M}, \frac{n_k}{M}\right) \\
		& \times \sum_{p, q = 0}^{M-1} \prob_{\delta}(p, q) \etp\left(\frac{p}{M}, \frac{q}{M}\right)
	\end{align*}
	with $\delta = \delta(\bm, \bn)$ as in Lemma \ref{lem:probability_distribution}. Writing
	\begin{align*}
		& \sum_{p, q = 0}^{M-1} \prob_{\delta}(p, q) \etp\left(\frac{p}{M}, \frac{q}{M}\right) \\
		= & \frac{1}{M(M-1)^\delta} \left((-1)^\delta \sum_{p=0}^{M-1} \etp\left(\frac{p}{M}, \frac{p}{M}\right) + \frac{(M-1)^\delta - (-1)^\delta}{M} \sum_{p, q=0}^{M-1} \etp\left(\frac{p}{M}, \frac{q}{M}\right)\right) \\
		= & \frac{(-1)^\delta \Delta + \left[(M-1)^\delta - (-1)^\delta\right] T}{M(M-1)^\delta} = \left(-\frac{1}{M-1}\right)^\delta \frac{\Delta - T}{M} + \frac{T}{M}.
	\end{align*}
	The above expression can thus be factorized as
	\begin{align*}
		\bbE \ETp(\cH) = & \prod_{k=1}^{d-1} \left(\sum_{m=0}^{M-1} \etp\left(\frac{m}{M}, \frac{m}{M}\right) - \frac{1}{M-1} \sum_{\substack{m, n=0 \\ m \neq n}}^{M-1} \etp\left(\frac{m}{M}, \frac{n}{M}\right)\right) \frac{\Delta - T}{M} \\
		& + \prod_{k=1}^{d-1} \left(\sum_{m, n = 0}^{M-1} \etp\left(\frac{m}{M}, \frac{n}{M}\right)\right) \frac{T}{M} \\
		= & \frac{M^{d-2}(\Delta-T)^d}{(M-1)^{d-1}} + M^{d-2}T^d
	\end{align*}
\end{proof}

In terms of periodic $L_2$-discrepancy we obtain the following.

\begin{thm} \label{cor:expectation_discrepancy}
	As $\cH$ runs uniformly through all $d$-dimensional weak $M$-Latin hypercubes we get
	\begin{align} \label{eq:expectation_latin_hypercube}
		\bbE \Lp(\cH)^2 = \frac{(M-1)(M+1)^d + (2M^2+1)^d - 2^d M^{2d}}{6^d M^2}.
	\end{align}
	In particular, for $d \geq 4$ there is a point set $X$  with $N$ points and constructed from a weak Latin hypercube with
	\begin{align} \label{ineq:upper_bound}
		\Lp(X) \leq \left(\frac{d}{2 \cdot 3^d}\right)^{1/2} (1+o(1)) N^{\frac{d-2}{d-1}}
	\end{align}
	with $o(1) \longrightarrow 0$ as $\#X \longrightarrow \infty$.
\end{thm}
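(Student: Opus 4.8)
The plan is to apply Theorem \ref{thm:expectation_energy} to the canonical energy triple $\cT$ attached to $\eta(s)=\tfrac12-s+s^2$, the one used in Example \ref{ex:discrepancy_as_energy}. First I would record the three relevant quantities for this $\eta$: one has $\eta(0)=\tfrac12$; the constant $T=\tfrac{2M^2+1}{6M}$ was already computed in \eqref{eq:Lp_energy_sum}; and since $\etp(s,s)=\eta(|s-s|)=\eta(0)$ we get $\Delta=\sum_{m=0}^{M-1}\etp(\tfrac mM,\tfrac mM)=M\eta(0)=\tfrac M2$, whence $\Delta-T=\tfrac M2-\tfrac{2M^2+1}{6M}=\tfrac{M^2-1}{6M}$. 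Substituting into Theorem \ref{thm:expectation_energy} and using $(M^2-1)^d=(M-1)^d(M+1)^d$ to simplify $\tfrac{(\Delta-T)^d}{(M-1)^{d-1}}=\tfrac{(M-1)(M+1)^d}{(6M)^d}$, one obtains
$$
\bbE\,\ETp(\cH)=M^{d-2}\!\left(\frac{(\Delta-T)^d}{(M-1)^{d-1}}+T^d\right)=\frac{(M-1)(M+1)^d+(2M^2+1)^d}{6^d M^2}.
$$

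Next I would pass to the periodic $L_2$-discrepancy. By Example \ref{ex:discrepancy_as_energy}, for the characteristic weight $w$ of $X=X(\cH)$ we have $\Lp(\cH)^2=\Lp(G,w)^2=-3^{-d}\big(\sum_{\bx\in G}w(\bx)\big)^2+\ETp(w)$, and for every weak $M$-Latin hypercube the mass $\sum_{\bx\in G}w(\bx)=\#X(\cH)=M^{d-1}$ is deterministic. Taking expectations and rewriting $3^{-d}M^{2(d-1)}=2^dM^{2d}/(6^dM^2)$ yields exactly \eqref{eq:expectation_latin_hypercube}.

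For the second assertion I would invoke the elementary first-moment (pigeonhole) principle: since the minimum of a finite family of reals does not exceed its average, there is a weak $M$-Latin hypercube $\cH$ with $\Lp(\cH)^2\le\bbE\,\Lp(\cH)^2$; take $X=X(\cH)$. It then suffices to estimate the right-hand side of \eqref{eq:expectation_latin_hypercube} asymptotically. Expanding $(2M^2+1)^d=2^dM^{2d}+d\,2^{d-1}M^{2d-2}+O(M^{2d-4})$, the numerator equals $d\,2^{d-1}M^{2d-2}+O(M^{2d-4})+(M-1)(M+1)^d$, and $(M-1)(M+1)^d=O(M^{d+1})$. For $d\ge 4$ one has both $2d-4<2d-2$ and $d+1<2d-2$, so the $d\,2^{d-1}M^{2d-2}$ term dominates and, using $N=M^{d-1}$,
$$
\bbE\,\Lp(\cH)^2=\frac{d\,2^{d-1}}{6^d}M^{2d-4}(1+o(1))=\frac{d}{2\cdot 3^d}M^{2d-4}(1+o(1))=\frac{d}{2\cdot 3^d}N^{2\frac{d-2}{d-1}}(1+o(1)).
$$
Taking square roots in $\Lp(X)^2\le\bbE\,\Lp(\cH)^2$ and using $\sqrt{1+o(1)}=1+o(1)$ gives \eqref{ineq:upper_bound}.

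I do not anticipate any serious obstacle: everything follows directly from Theorem \ref{thm:expectation_energy}, Example \ref{ex:discrepancy_as_energy}, and a routine asymptotic expansion. The only point worth flagging is the hypothesis $d\ge 4$: the lower-order contribution $(M-1)(M+1)^d$ to the numerator has size $M^{d+1}$, which is absorbed into the $(1+o(1))$ precisely when $d+1<2d-2$, i.e. $d\ge 4$; for $d=3$ it is of the same order $M^4$ as the main term and perturbs the leading constant, which is why the sharp constant is claimed only for $d\ge 4$ (while for $d\ge 3$ the weaker order statement $\Lp(X)\gtrsim N^{(d-2)/(d-1)}$ already matches the lower bound of Theorem \ref{cor:lower_bound_latin_hypercube_discrepancy}).
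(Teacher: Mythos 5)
Your proposal is correct and is exactly the argument the paper intends: the paper leaves this theorem as an immediate consequence of Theorem \ref{thm:expectation_energy} specialized to the energy triple of Example \ref{ex:discrepancy_as_energy}, and your computations of $T$, $\Delta$, the normalization $-3^{-d}M^{2(d-1)}$, the first-moment selection of a good $\cH$, and the asymptotic expansion (including the role of $d\geq 4$ in absorbing the $(M-1)(M+1)^d$ term) all match. No gaps.
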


Combining this with Theorem \ref{cor:Lp_Le_latin_hypercubes} also shows that, for $d \geq 4$ there is a weak Latin hypercube $\cH$ (in fact the same one that fulfils \eqref{ineq:upper_bound}) such that for the corresponding point set $X$ it holds
$$
\Le(X) \leq \left(\frac{d}{12^d}\right)^{1/2} (1+o(1)) N^{\frac{d-2}{d-1}}.
$$

\begin{rem}
	A closer look at the proof of Lemma \ref{lem:probability_distribution} shows that we even have \eqref{eq:expectation_latin_hypercube} for the following randomly generated weak Latin hypercube. Fix a weak Latin hypercube $\cH_0$ and choose $d$ permutations $\sigma_1, \dots, \sigma_d$ uniformly at random. We then get a random weak Latin hypercube by
	$$
	\cH \coloneqq \{(\sigma_1(m_1), \dots, \sigma_d(m_d)): \bm \in \cH_0\}.
	$$
	This random process is much simpler to implement algorithmically and together with \eqref{eq:expectation_latin_hypercube} we see that for every weak Latin hypercube $\cH_0$ there are permutations $\sigma_1, \dots, \sigma_d$ such that the corresponding weak Latin hypercube $\cH$ with point set $X = X(\cH)$ fulfils
	$$
	\Lp(X) \leq \left(\frac{d}{2 \cdot 3^d}\right)^{1/2} (1+o(1)) N^{\frac{d-2}{d-1}}
	$$
	for $d \geq 4$. In this sense, every weak Latin hypercube can be modified via coordinate permutations to get a weak Latin hypercube of asymptotically optimal order.
\end{rem}

Theorem \ref{cor:lower_bound_latin_hypercube_discrepancy} together with Theorem \ref{cor:expectation_discrepancy} shows that for $d \geq 3$ we have the correct asymptotic rate of $N^{\frac{d-2}{d-1}}$ for the optimal periodic $L_2$-discrepancy of weak Latin hypercubes and even the constant factors match for $d \geq 4$.

Compared to the asymptotically optimal rate $\Lp(X) \gtrsim (\log N)^{\frac{d-1}{2}}$ for general sets $X \sbse [0, 1)^d, \#X = N$ (see \cite{HKP20}, based on a method in \cite{Rot54}) as discussed at \eqref{ineq:roth}, we see that for $d \geq 3$ weak Latin hypercubes are unable to reach a globally optimal behaviour in the continuous setting. Permutation sets however seem to be a very good candidate for (approximate) global minimizers of the periodic $L_2$-discrepancy (see the numerical results in \cite{HO16}) and perhaps other notions of discrepancy.

\section{Conclusion and outlook}

Motivated by understanding the utility of permutations for disrepancy theory we introduced permutation sets and observed that they obey the special relation given in Theorem \ref{thm:Lp_Le_permutation}. This was then generalized to weak Latin hypercubes, a class of point sets that we introduced for this purpose, to arbitrary dimensions for more general notions of energies (Proposition \ref{thm:expression_for_excess}) and then applied to the periodic and extreme $L_2$-discrepancy to obtain Theorem \ref{cor:Lp_Le_latin_hypercubes}. Since this gives a lower bound for the periodic $L_2$-discrepancy we aimed for a better understanding of the generalized energy of point sets on the discretized torus (Theorem \ref{thm:svd_energy}) concluding in the desired bounds in Theorem \ref{cor:lower_bound_latin_hypercube_discrepancy}.

There are still some question left for further research. We give some of them here.

\begin{itemize}
	\item [(i)] It turned out that weak Latin hypercubes were the right setting for generalizing Theorem \ref{thm:Lp_Le_permutation} but yielded point sets far from optimality for $L_2$-discrepancy. What is the right generalization for finding (approximate) global minimizers of the periodic $L_2$-discrepancy in dimensions $d \geq 3$? Perhaps Latin hypercubes, which gives sets of the form
	$$
	\left\{\frac{1}{M}(\sigma_1(m), \dots, \sigma_d(m)) : m=0, 1, \dots, M-1\right\}
	$$
	for permutations $\sigma_1, \dots, \sigma_d$ of $\{0, 1, \dots, M-1\}$ are better candidates for this task. While we are not aware of any numerical results in this direction (\cite{HO16} only treats the $2$-dimensionsl case), it is known that such sets can at least be asymptotically optimal. For example shifted digital nets \cite{KP22} and unshifted order $2$ digital $(t, m, d)$-nets over $\bbZ_2$ with regular generator matrices \cite{HMOU16, KP22} give asymptotically optimal point sets of this form. As mentioned in \cite{HMOU16}, there are no know (rational) lattices which achieve Roth's bound for $d \geq 3$ dimensions.
	
	\item [(ii)] Is the result of Proposition \ref{thm:expression_for_excess} best possible in the following sense? Let $w, w' : G \ra \bbR$ be two weights on the discretized torus. We call $w$ and $w'$ \textit{separable} if there is an energy triple $\cT$ so that $\exT(w) \neq \exT(w')$, else \textit{non-separable}. Proposition \ref{thm:expression_for_excess} shows that two weights both of equal constant row sum are non-separable. Are there other cases of non-separable weights?
	
	\item [(iii)] Which other statistical statements can be made about random (weak) Latin hypercubes? For $d=2$, it seems possible to get a concentration result via Chebyshev's inequality using
	$$
	\Var \Lp(\sigma)^2 = \frac{(N-3)(N-2)^2(N-1)^2(N+1)^2}{16200N^5} \sim \frac{N^2}{16200},
	$$
	where $\sigma$ is a randomly chosen (uniformly) permutation of $\{0, 1, \dots, N-1\}$. We have checked this formula via computer calculations for $N \leq 13$ but we did not verify it in detail. We have no suggestion for a similar formula for higher dimensional (weak) Latin hypercubes.
\end{itemize}

\section{Acknowledgement} 

This work was funded by the ESF Plus Young Researchers Group ReSIDA-H2 (SAB, project number 100649391). The author would also like to thank Dmitriy Bilyk for invaluable input and the anonymous referee for many helpful comments undoubtedly improving the quality of the paper. 

\end{document}